\documentclass[12pt, reqno]{amsart}

\usepackage{amsfonts,amsmath}
\usepackage{amscd}
\usepackage{amssymb}

\allowdisplaybreaks

\usepackage{hyperref}


\date{\today}

\vfuzz2pt 
\hfuzz2pt 
\newtheorem{thm}{Theorem}[section]
\newtheorem{cor}[thm]{Corollary}

\newtheorem{prop}[thm]{Proposition}
\theoremstyle{definition}

\theoremstyle{remark}
\newtheorem{rem}[thm]{Remark}

\numberwithin{equation}{section}

\setlength{\textwidth 6.5in} \setlength{\textheight 9.0in}
\voffset -0.7in \hoffset -0.6in


\newcommand{\R}{\mathbb R}

\newcommand{\He}{\mathbb H}

\newcommand{\C}{{\mathbb C}}

\renewcommand{\Im}{\operatorname{Im}}
\newcommand{\tr}{\operatorname{tr}}

\usepackage{color}

\title[Beurling's theorem on the Heisenberg group]
{  Beurling's theorem on the Heisenberg group}

\author[ S. Thangavelu]{  Sundaram Thangavelu}

\address[S. Thangavelu]{Department of Mathematics\\
Indian Institute of Science\\
560 012 Bangalore, India}

\email{veluma@iisc.ac.in}

\date{}
 \keywords{Fourier transform, Beurling's theorem, Heisenberg group, Hermite and Laguerre functions, Gutzmer's formula}
 \subjclass[2010]{Primary:  43A85, 42C05. Secondary: 33C45, 35P10}
 
 %
 
\begin{document}

\maketitle

\begin{abstract}  We formulate and prove an analogue of Beurling's theorem for the Fourier transform on the Heisenberg group. As a consequence we deduce Hardy and Cowling-Price  theorems.
 \end{abstract}

\section {Introduction}
There are several uncertainty principles for the Fourier transform on $ \R^n $ such as theorems of Hardy, Cowling and Price, Gelfand and Shilov and Beurling.  Each of the  first three of these theorems can be deduced as corollaries of the fourth one which we recall now. Beurling's theorem states that there is no nontrivial function which satisfies
\begin{equation}\label{B-C} \int_{\R^n} \int_{\R^n} |f(y)| \,   |\widehat{f}(\xi)| \, e^{|(y, \xi)|} \, dy d\xi   < \infty .
\end{equation}
A proof of this theorem in the one dimensional case was published by H\"ormander in \cite{LH}.  Later Bonami et al \cite{BDJ}  extended the result to higher dimensions.  Recently, Hedenmalm \cite{HH} came up with a very elegant and simple proof of  Beurling's theorem, which we will recall later for the convenience of the reader, see subsection 3.1.\\

 Observe that the Beurling's condition (\ref{B-C}) can be restated  in the following form: 
 $$   w_0(\widehat{f},y) =: \int_{\R^n}  |\widehat{f}(\xi)| e^{|(y, \xi)|} d\xi, \,\,\,  \int_{\R^n} |f(y)| w_0(\widehat{f}, y) dy < \infty.$$
 The function $ w_0(\hat{f},y) $ is an increasing function of $ |y| $ and  dominates $ |f(x+iy)|:$
 $$ |f(x+iy)| \leq (2\pi)^{-n/2}  \int_{\R^n}  |\widehat{f}(\xi)| e^{|(y, \xi)|} d\xi \leq C\,  w_0(\widehat{f},y).$$
 Here we have tacitly used the fact that under the Beurling's condition, $ f $ extends to $ \C^n $ as a holomorphic function. 
Once the holomorphic extendability of $ f $ is given, Hedenmalm's proof  goes through under the  modified Beurling's condition 
\begin{equation}\label{mod-b}
 \sup_{x\in \R^n} |f(x+iy)| \leq C w(\widehat{f}, y) ,\,\,\,    \int_{\R^n} |f(y)| w(\widehat{f}, y) dy < \infty, 
  \end{equation}
for some  $ w(\widehat{f}, y) $ which is increasing as a function of $ |y|.$   In particular, we can take
$$ w_1(\widehat{f},y) = \sup_{|u| \leq |y|} \int_{\R^n}  |\widehat{f}(\xi)| \, e^{- u \cdot \xi} d\xi.  $$ 
Thus Beurling's theorem can be stated in the following form.

\begin{thm}\label{B-E} 
There is no nontrivial function  $ f $ on $ \R^n $ for which
\begin{equation} \int_{\R^n} |f(y)|  \Big( \sup_{|u| \leq |y|} \int_{\R^n}  |\widehat{f}(\xi)| \, e^{- u \cdot \xi} d\xi \Big)  dy  < \infty .
\end{equation}
\end{thm}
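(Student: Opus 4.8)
The plan is to show that the stated hypothesis is nothing but the modified Beurling condition \eqref{mod-b} with the specific weight $w=w_1$, and then to let Hedenmalm's argument (recalled in Subsection~3.1) do the work. Two properties of
\[
w_1(\widehat{f},y)=\sup_{|u|\le |y|}\int_{\R^n}|\widehat{f}(\xi)|\,e^{-u\cdot\xi}\,d\xi
\]
must be put on record. First, monotonicity: as $|y|$ grows the admissible set $\{u:|u|\le |y|\}$ enlarges, so $w_1(\widehat{f},y)$ is a nondecreasing function of $|y|$. Second, domination of the holomorphic extension: Fourier inversion gives
\[
f(x+iy)=(2\pi)^{-n/2}\int_{\R^n}\widehat{f}(\xi)\,e^{ix\cdot\xi}\,e^{-y\cdot\xi}\,d\xi ,
\]
and since $u=y$ is admissible in the supremum, moving the absolute value inside yields
\[
\sup_{x\in\R^n}|f(x+iy)|\le (2\pi)^{-n/2}\int_{\R^n}|\widehat{f}(\xi)|\,e^{-y\cdot\xi}\,d\xi\le (2\pi)^{-n/2}\,w_1(\widehat{f},y).
\]

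Before these formulas can be used I must justify that $f$ extends holomorphically to $\C^n$. Assume $f\not\equiv 0$. Finiteness of $\int_{\R^n}|f(y)|\,w_1(\widehat{f},y)\,dy$ forces $w_1(\widehat{f},\cdot)$ to be finite on a set of positive measure meeting $\{f\ne 0\}$, and by monotonicity it is then finite on a ball $|y|\le r$ with $r>0$; in particular $\int|\widehat{f}(\xi)|\,e^{-u\cdot\xi}\,d\xi<\infty$ for $|u|\le r$, so the inversion integral converges and defines a holomorphic function on the tube $\{|\Im z|<r\}$, whence $f$ is real analytic. I claim $w_1(\widehat{f},y)<\infty$ for every $y$. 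If instead $w_1(\widehat{f},y_0)=\infty$ for some $y_0$, then by monotonicity $w_1(\widehat{f},y)=\infty$ on $\{|y|\ge |y_0|\}$, and integrability forces $f$ to vanish a.e. there; thus $f$ would be compactly supported as well as real analytic, hence $f\equiv 0$, a contradiction. Therefore $\int|\widehat{f}(\xi)|\,e^{-u\cdot\xi}\,d\xi<\infty$ for all $u\in\R^n$. Testing at $u=\pm a\,e_j$ and summing over signs and coordinates shows $\int|\widehat{f}(\xi)|\,e^{b|\xi|}\,d\xi<\infty$ for every $b>0$, so the inversion integral converges locally uniformly in $z$ and $f$ extends to an entire function on $\C^n$.

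With the entire extension in hand, $f$ satisfies \eqref{mod-b} verbatim with $w=w_1$: the first inequality there is the domination just proved, and the second is exactly the hypothesis of the theorem. Since Hedenmalm's argument requires only an entire $f$ together with a weight that increases in $|y|$, dominates $\sup_x|f(x+iy)|$, and is integrated against $|f|$ to a finite value, it applies and gives $f\equiv 0$. I expect the one genuinely delicate point internal to this theorem to be the extendability step: one has to exclude the possibility that $f$ lives precisely where the weight is infinite, and the real-analyticity-versus-compact-support dichotomy above is the cleanest way I see around it. Everything else is the bookkeeping that recasts the stated integral condition into the hypotheses for which Hedenmalm's proof was engineered, so the substantive analytic difficulty resides in that cited argument rather than in the reduction.
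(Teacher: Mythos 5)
Your proposal is correct and takes essentially the same route as the paper: recast the hypothesis as the modified condition \eqref{mod-b} with $w=w_1$, note that $w_1(\widehat{f},y)$ is radial, increasing in $|y|$, and dominates $\sup_{x}|f(x+iy)|$ via Fourier inversion, and then invoke Hedenmalm's argument from Subsection 3.1. The one place you go beyond the paper is welcome rather than divergent: the paper only \emph{tacitly} assumes the holomorphic extendability of $f$, whereas you derive it from the hypothesis itself by a correct dichotomy (either $w_1(\widehat{f},\cdot)$ is finite everywhere, in which case testing at the points $u=\pm a e_j$ gives $\int|\widehat{f}(\xi)|e^{b|\xi|}\,d\xi<\infty$ for all $b$ and hence an entire extension, or $w_1$ is infinite outside some ball, forcing the real-analytic $f$ to vanish there and hence identically).
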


In this article our aim is to prove an analogue of Theorem \ref{B-E} for the Fourier transform on the Heisenberg group. In order to state our result, let us recall some definitions and set the stage. Given a function  $ f $ on the Heisenberg group $ \He^n $ its Fourier transform $ \hat{f}(\lambda) $ is the operator valued function on $ \R^\ast $  defined by
$$  \hat{f}(\lambda) = \int_{\He^n} f(z,t) \, \pi_\lambda(z,t) \, dz dt $$
where $ \pi_\lambda(z,t) = e^{i\lambda t} \pi_\lambda(z) $ are the Schr\"odinger representations, all realised on the same Hilbert space $ L^2(\R^n).$  Instead of $ (z,t) $ we will use the real coordinates $ (x,u,t) $ on $ \He^n $ which allows us to complexify $ x $ and $ u $ into $ z= x+iy, w = u+iv.$ The operators  $ \pi_\lambda(x,u) $ have  natural extensions to complex values as unbounded operators $ \pi_\lambda(z,w).$  We  let $ f^\lambda(x,u) $ stand for the inverse Fourier transform of $ f $ in the  $t$-variable so that
$$ \widehat{f}(\lambda) = \int_{\R^{2n}} f^\lambda(x,u) \, \pi_\lambda(x,u) \, dx du = \pi_\lambda(f^\lambda).$$
We need to make a preliminary assumption on $ \widehat{f}(\lambda) $ which allows us to holomorphically extend $ f^\lambda(x,u) $ to $ \C^{2n}.$\\

We consider the action of $ U(n) $ on $ \He^n$ given by $ \sigma(z,t) = (\sigma z, t) $ which is an automorphism of $ \He^n$. Let $ \sigma(x,u) $ stand for the action of $ U(n) $ identified with $ Sp(n,\R) \cap O(2n,\R) $ on $ \R^{2n}.$ It has a natural action on $ \C^n \times \C^n $ and so it makes sense to talk about $ \sigma (z, w) $ for $ z, w \in \C^n.$  On the Fourier transform of $ \hat{f}(\lambda) $ we impose the condition
\begin{equation}\label{condi-one}
  \int_{U(n)}  \|\pi_\lambda(\sigma(z,w)) \widehat{f}(\lambda)\|_{HS}^2  \, d\sigma  < \infty.
  \end{equation}
Using Gutzmer's formula for special Hermite expansions, we will show that (\ref{condi-one}) allows us to extend $ f^\lambda(x,u) $ holomorphically.  We then look for a radial function $ F_\lambda(y,v) $ satisfying
\begin{equation}\label{condi-two} | \tr \big( \pi_\lambda(x+iy,u+iv)^\ast \widehat{f}(\lambda)\big) | \leq C_\lambda e^{\frac{1}{2}\lambda(u\cdot y- v\cdot x)}\, F_\lambda(y,v) 
\end{equation}
where $ C_\lambda $ is independent of $ (x,u).$ In view of the relation  (as $ \pi_\lambda(iy,iv) $ are self-adjoint) 
$$  \pi_\lambda(iy,iv) \pi_\lambda(x,u)  = \pi_\lambda(x+iy, y+iv) e^{-\frac{1}{2}\lambda(u\cdot y- v\cdot x)} $$ 
a candidate for  $ F_\lambda(y,v) $ is given by  the following analogue of $ w_1(\hat{f},y) $ on $ \R^n$ defined earlier:
\begin{equation}\label{candi}  w_\lambda(\widehat{f},(y,v)) =   \sup_{|(y^\prime,v^\prime) \leq |(y,v)| } \| \pi_\lambda((i(y^\prime,v^\prime))  \widehat{f}(\lambda) \|_1  \end{equation}
where $ \|T\|_1 = \tr |T| ,\, |T| = (T^\ast T)^{1/2}$ is the trace norm of the operator. Having set up the notations, here is our Beurling's theorem.

\begin{thm}\label{B-H}  Assume that  $ \mathcal{R}(\widehat{f}(\lambda)) \subset \mathcal{D}(\pi_\lambda(z,w))$ and $ \pi_\lambda(z,w) \widehat{f}(\lambda) $ is of trace class for all $ (z,w) \in \C^{2n} $   such that
the assumption (\ref{condi-one}) holds. Then the following  condition cannot hold for all $ \lambda$ unless $ f =0 :$ 
\begin{equation}\label{mod-b-h}     \int_{\R^{2n}}  | f^{\lambda}(y,v)|\,\, \Big(  \sup_{|(y^\prime,v^\prime)| \leq |(y,v)| } \| \pi_\lambda((i(y^\prime,v^\prime))  \widehat{f}(\lambda) \|_1 \Big)  dy\,dv  < \infty.
\end{equation}

\end{thm}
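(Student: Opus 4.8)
The plan is to reduce the statement, for each fixed $\la$, to the one–dimensional modified form of Beurling's theorem (Hedenmalm's argument under condition (\ref{mod-b})), applied along complex lines through the origin. First I would record the majorization already built into (\ref{condi-two}). By the inversion formula for the Weyl transform one has $f^\la(x,u) = c_\la \, \tr(\pi_\la(x,u)^\ast \widehat{f}(\la))$ for a constant $c_\la$, and the hypothesis that $\pi_\la(z,w)\widehat{f}(\la)$ is trace class lets me extend this to
\[
f^\la(x+iy,u+iv) = c_\la\,\tr\big(\pi_\la(x+iy,u+iv)^\ast \widehat{f}(\la)\big),
\]
which is exactly the holomorphic extension furnished by (\ref{condi-one}) and Gutzmer's formula. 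Using $\pi_\la(iy,iv)\pi_\la(x,u) = \pi_\la(x+iy,u+iv)\,e^{-\frac12\la(u\cdot y - v\cdot x)}$, the self-adjointness of $\pi_\la(iy,iv)$, the unitarity of $\pi_\la(x,u)$, and the elementary bound $|\tr(AB)|\le\|A\|\,\|B\|_1$ (with $\|\cdot\|$ the operator norm), I obtain
\[
|f^\la(x+iy,u+iv)| \le |c_\la|\, e^{\frac12\la(u\cdot y - v\cdot x)}\, w_\la(\widehat{f},(y,v)),
\]
which is (\ref{condi-two}) with $F_\la = w_\la$; recall $w_\la(\widehat{f},(y,v))$ depends only on $|(y,v)|$ and is increasing in it.

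The main obstacle is the real exponential factor $e^{\frac12\la(u\cdot y-v\cdot x)}$: since it grows in $(x,u)$, the supremum over $(x,u)$ of $|f^\la(x+iy,u+iv)|$ is infinite, so the modified Beurling condition (\ref{mod-b}) cannot be checked on $\R^{2n}$ directly and the Euclidean theorem does not apply to $f^\la$ as it stands. The device that removes this phase is to test only along complex lines through the origin in a single real direction. Fix a unit vector $\theta\in\mathbf{S}^{2n-1}\subset\R^{2n}$ and set $\psi_\theta(\zeta) = f^\la(\zeta\theta)$ for $\zeta = s+is'\in\C$, an entire function of one variable. Writing the symplectic form as $\om((x,u),(y,v)) = u\cdot y - v\cdot x$, the exponent along the line is $\tfrac12\la\,\om(s\theta,s'\theta) = \tfrac12\la\,ss'\,\om(\theta,\theta) = 0$ because $\om$ is alternating. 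Hence the phase disappears and, with $\tilde w_\la$ the radial profile of $w_\la$,
\[
\sup_{s\in\R}|\psi_\theta(s+is')| \le |c_\la|\, w_\la(\widehat{f},s'\theta) = |c_\la|\,\tilde w_\la(|s'|),
\]
which is the majorant side of (\ref{mod-b}) in dimension one, with $\tilde w_\la$ increasing in $|s'|$.

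For the integrability side I would pass to polar coordinates: the hypothesis (\ref{mod-b-h}) reads
\[
\int_{\mathbf{S}^{2n-1}}\!\!\int_0^\infty |f^\la(r\theta)|\,\tilde w_\la(r)\, r^{2n-1}\,dr\,d\theta < \infty,
\]
so for almost every $\theta$ one has $\int_\R |\psi_\theta(s)|\,\tilde w_\la(|s|)\,ds < \infty$; indeed the weight $r^{2n-1}$ only helps for $r\ge 1$, while near the origin $f^\la$ is continuous and $\tilde w_\la$ bounded. Since $\tilde w_\la \ge \tilde w_\la(0) = \|\widehat{f}(\la)\|_1 > 0$ whenever $\widehat{f}(\la)\ne 0$, this also gives $\psi_\theta\in L^1(\R)$. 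Applying the one–dimensional modified Beurling theorem, which requires only a radial increasing majorant, forces $\psi_\theta\equiv 0$ for almost every $\theta$, i.e. $f^\la(r\theta) = 0$ for a.e. $\theta$ and all $r$. By continuity (real-analyticity) of $f^\la$ this yields $f^\la\equiv 0$, whence $\widehat{f}(\la) = \pi_\la(f^\la) = 0$; as this holds for every $\la$, injectivity of the group Fourier transform gives $f = 0$.

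Beyond the phase cancellation just described, I expect the only delicate points to be the rigorous justification of the holomorphic extension and of the trace manipulations, which rest on the trace-class hypotheses and on Gutzmer's formula established earlier; once those are in place the reduction to the scalar theorem is clean.
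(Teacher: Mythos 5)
Your proposal is correct in substance and rests on the same two pillars as the paper's own argument: the holomorphic extension of $f^\lambda$ coming from hypothesis (\ref{condi-one}) via Gutzmer's formula, and the trace-norm majorization in which the dangerous factor $e^{\frac{1}{2}\lambda(u\cdot y-v\cdot x)}$ disappears because the symplectic form vanishes on a pair of parallel vectors. Where you genuinely diverge is the endgame. The paper never slices: it forms the single $2n$-dimensional correlation function
$$ F_\lambda(\zeta) \,=\, \int_{\R^{2n}} \overline{f^\lambda(y,v)}\, f^\lambda(\zeta y,\zeta v)\, dy\, dv, $$
observes that the same trace bound (for $\zeta = r+is$, $|s|\le 1$) together with (\ref{mod-b-h}) makes $F_\lambda$ holomorphic on the strip $|\Im \zeta|<1$ and continuous up to its boundary, records the functional equation $F_\lambda(r) = r^{-2n}\overline{F_\lambda(r^{-1})}$, and then repeats Hedenmalm's complex-analytic argument once, concluding $F_\lambda \equiv 0$, hence $\|f^\lambda\|_2^2 = F_\lambda(1) = 0$. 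You instead invoke the one-dimensional modified Beurling theorem as a black box along almost every ray $\zeta \mapsto f^\lambda(\zeta\theta)$, after a polar-coordinates/Fubini reduction. Both reductions work. The paper's version buys uniformity: the hypothesis (\ref{mod-b-h}) is \emph{exactly} the dominating integral needed for holomorphy of $F_\lambda$, with no Jacobian to fight. Yours buys modularity, at the cost of the $r^{2n-1}$ weight, which forces the near-origin patch you describe; there you should also dispose of the possibility $\sup_{|(y^\prime,v^\prime)|\le 1}\|\pi_\lambda(i(y^\prime,v^\prime))\widehat{f}(\lambda)\|_1 = \infty$, in which case $w_\lambda(\widehat{f},(y,v)) = \infty$ for all $|(y,v)|\ge 1$, so (\ref{mod-b-h}) already forces $f^\lambda = 0$ a.e.\ outside the unit ball and real-analyticity finishes.

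One caution: the step you set aside as a delicate point to be justified later is not a formality but roughly half of the paper's proof. The trace-class hypothesis by itself does not give holomorphy of $(z,w) \mapsto \tr\big(\pi_\lambda(z,w)^\ast \widehat{f}(\lambda)\big)$, nor does it identify that function with a holomorphic extension of $f^\lambda$. In the paper, (\ref{condi-one}) enters through a concrete chain: Proposition \ref{conseq-1} converts it into finiteness, for every $(y,v)$, of $\sum_{k} \frac{k!(n-1)!}{(k+n-1)!}\, \varphi_{k,\lambda}^{n-1}(2iy,2iv)\, \|f^\lambda \ast_\lambda \varphi_{k,\lambda}^{n-1}\|_2^2$; Perron's asymptotics (\ref{perron}) for Laguerre polynomials then yield $\|f^\lambda \ast_\lambda \varphi_{k,\lambda}^{n-1}\|_2 \le C e^{-\rho\sqrt{(2k+n)|\lambda|}}$ for every $\rho>0$; and the factorization $f^\lambda = g_\lambda \ast_\lambda P_\rho^\lambda$ through the Poisson kernel extends $f^\lambda$ holomorphically to the tubes $|(y,v)| < \gamma\rho$, hence to all of $\C^{2n}$. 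Your argument needs this extension even to define $\psi_\theta$, so it must be supplied rather than cited.
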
 

\begin{rem} An examination of the proof will show that in the above theorem  we can replace $ w_\lambda(\widehat{f},(y,v))$ by any radial increasing $ F_\lambda(y,v) $ satisfying (\ref{condi-two}).
Thus the conclusion of the Theorem \ref{B-H} holds under the assumption that 
\begin{equation}\label{mod-b-h-1}     \int_{\R^{2n}}  | f^{\lambda}(y,v)|\,\, F_\lambda(y,v)   dy\,dv  < \infty.
\end{equation}
It is this flexibility with the choice of $ F_\lambda(y,v) $ that allows  us to deduce  theorems of Hardy and Cowling-Price from Theorem \ref{B-H}.\\
\end{rem}

As we have mentioned in the beginning, many uncertainty principles on $ \R^n $ such as theorems of Hardy, Cowling-Price and Gelfand-Shilov can be easily deduced from Beurling's theorem. In the case of Hardy conditions, namely
$$ |f(x)| \leq c_1\, q_a(x),\,\,\, |\widehat{f}(\xi)| \leq c_2\, \widehat{q_b}(\xi) ,$$ 
stated in terms of the heat kernel $ q_t $ associated to the Laplacian,  Fourier inversion gives us
$$ |f(x+iy)| \leq (2\pi)^{-n/2} \int_{\R^n} |\widehat{f}(\xi)|\, e^{- y\cdot \xi} d\xi \leq C\, \int_{\R^n} e^{-b|\xi|^2} \, e^{- y \cdot \xi} d\xi = C^\prime q_b(iy) .$$
Thus when $ a < b $ the condition (\ref{mod-b}) is satisfied with $ w(\hat{f},y) = q_b(iy)$ and we obtain Hardy's theorem. In the case of Cowling-Price theorem we assume
$$ f(x) = g(x) q_a(x),\, \widehat{f}(\xi) = h(\xi) \widehat{q_b}(\xi),\,\,  g, h \in L^2(\R^n).$$ 
Consequently, $ f = \varphi \ast q_b $ where $ \widehat{\varphi} = h,$ and hence $ f $ belongs to the weighted Bergman space $ \mathcal{B}_b(\C^n) $ consisting of entire functions which are square integrable on $ \C^n$ with respect to the weight function $ q_{2b}(2y).$ The reproducing kernel for this Hilbert space is given by $ K(z,w) = q_{2b}(z-\bar{w}) $ and from general theory we get the estimate
$$  |f(x+iy)|^2 \leq C\, K(z, z) = C\, q_{2b}(2iy) = C^\prime\, (q_b(iy))^2.$$
Thus we can again take  $ w(\widehat{f},y) = q_b(iy)$ in  (\ref{mod-b}) and Cowling-Price theorem follows.\\

More generally, theorems of Hardy and Cowing-Price can be put in the general frame work of   weighted Bergman spaces $ \mathcal{B}(\C^n, w) $ which  are invariant under the natural action of  $ \R^n$ by translations. This invariance forces $ w $ to be a function of $ y $ alone and the reproducing kernel takes the form $ K(z,w) = k(z-\bar{w}) $ for some function $ k.$  We are interested in the situation when $ \mathcal{B}(\C^n, w) $ occurs as the image of  $ L^2(\R^n) $ under a transform $ T_qf(z) = f \ast q(z) $ for some nice function $ q \in L^1(\R^n).$  Under these circumstances we have 
$$ \int_{\C^n} |f \ast q(z)|^2 \, w(y) \, dz = c_n \int_{\R^n} |f(x)|^2 \, dx.$$
Moreover, the reproducing kernel is given by $ K(z,w) = q \ast q(z-\bar{w}) $ and we can prove the following theorem.

\begin{thm}\label{B-E-1}  Let  $ q \in L^1(\R^n) $ be such that the image of $ L^2(\R^n) $ under the transform $  T_qf(z) =  f \ast q(z) $ is a weighted Bergman space $ \mathcal{B}(\C^n, w) $  where $ w$ is radial. Then there is no nontrivial $ f \in L^2(\He^n) $ for which 
$$ \int_{\R^n} | f \ast q(y)| \, \sqrt{q \ast q(2iy)} \, dy < \infty .$$
\end{thm}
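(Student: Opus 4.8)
The plan is to reduce the statement to the modified form of Beurling's theorem on $\R^n$, namely condition (\ref{mod-b}), applied to the single entire function $g = f \ast q$. First I would observe that by hypothesis $g = T_q f$ lies in the weighted Bergman space $\mathcal{B}(\C^n, w)$, so $g$ is automatically entire and the holomorphic extendability required to run Hedenmalm's argument is available for free. Since $w$ is radial, the reproducing kernel has the translation-invariant form $K(z,\zeta) = q \ast q(z - \bar\zeta)$ recorded in the text preceding the statement; in particular its diagonal is $K(z,z) = q \ast q(z - \bar z) = q \ast q(2iy)$ for $z = x + iy$.

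The key step is the pointwise bound furnished by the reproducing kernel. For any element $g$ of a reproducing kernel Hilbert space one has $|g(z)|^2 \le \|g\|_{\mathcal{B}}^2\, K(z,z)$, which here reads
\begin{equation*}
\sup_{x \in \R^n} |f \ast q(x + iy)| \le \|g\|_{\mathcal{B}}\, \sqrt{q \ast q(2iy)} =: C\, W(y).
\end{equation*}
Thus $g$ satisfies the first half of (\ref{mod-b}) with weight $W(y) = \sqrt{q \ast q(2iy)}$, while the hypothesis of the theorem is precisely $\int_{\R^n} |g(y)|\, W(y)\, dy < \infty$, which is the second half of (\ref{mod-b}).

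Before invoking Beurling's theorem I must verify that $W$ is radial and increasing in $|y|$, and this I expect to be the main obstacle. Radiality is immediate from the radiality of $w$. For monotonicity I would pass to the Fourier side: writing $q \ast q(2iy) = c_n \int_{\R^n} \widehat{q}(\xi)^2\, e^{-2 y \cdot \xi}\, d\xi$, each ray $r \mapsto \phi(r\theta)$ with $|\theta| = 1$ is an integral of the convex functions $r \mapsto e^{-2r(\theta\cdot\xi)}$ against the nonnegative measure $\widehat{q}(\xi)^2\, d\xi$ (here one uses that $q$ is even and real, as in the heat-kernel examples, so that $\widehat{q}^2 \ge 0$), hence convex; combined with radiality it is even in $r$, and a convex even function is increasing in $|r| = |y|$. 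Therefore $W$ is a radial increasing weight dominating $\sup_{x}|f\ast q(x+iy)|$, exactly of the type for which the Remark following Theorem \ref{B-E} guarantees that Hedenmalm's proof goes through.

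Applying the modified Beurling theorem to $g$ then forces $g = f \ast q \equiv 0$. Finally, the isometry $\int_{\C^n} |f \ast q(z)|^2\, w(y)\, dz = c_n \int_{\R^n} |f(x)|^2\, dx$ recorded before the statement shows that $T_q$ is injective, so $g \equiv 0$ yields $f = 0$, completing the argument.
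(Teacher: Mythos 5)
Your proposal is correct and follows essentially the same route the paper takes (sketched in its introduction rather than in Section 3): the reproducing-kernel bound $\sup_x|f\ast q(x+iy)|\le C\sqrt{q\ast q(2iy)}$, the observation that $\sqrt{q\ast q(2iy)}$ is a radial increasing weight, then the modified Beurling condition (\ref{mod-b}) fed into Hedenmalm's argument, with the isometry of $T_q$ giving $f=0$ from $f\ast q=0$. The only small difference is how positivity of the measure $\widehat{q}(\xi)^2\,d\xi$ is justified in the monotonicity step: you assume $q$ is real and even, whereas the paper obtains $\widehat{q}(\xi)^2=c\,\widehat{w}(2i\xi)^{-1}>0$ directly from the isometry relating $q$ and the radial weight $w$, so no extra hypothesis on $q$ is needed.
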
 

As observed above, both Hardy and Cowling-Price theorems correspond to the transform $ T_bf(z) = f \ast q_b(z) $ known as the Segal-Bargmann transform or heat kernel transform.\\

The relation between the weight function $ w $ and the reproducing kernel $ k $ can be read out by considering the orbital integrals of $ F $. Consider the action action of the Euclidean motion group $ M(n) $ on $ \R^n$ given by $ g .\, y = (x+\sigma y) $ for $ g = (x,\sigma) \in M(n), \sigma \in SO(n).$ This action has a natural  extension to $ \C^n$ simply given by $ g. (u+iv) = g. \,u +i g .\,v.$ The orbital integrals of $ |F|^2 $ are then defined by
$$ O_{|F|^2}(z) = \int_{M(n)} |F(g.\,z)|^2 \, dg.$$ 
As it turns out, $ O_{|F|^2}(x+iy) $ depends only on $ y $ which is radial and given explicitly in terms of $ \widehat{f}$ where $ f $ is just the restriction of $ F $ to $ \R^n.$
$$ O_{|F|^2}(z) = c_n\, \int_{\R^n} |\widehat{f}(\xi)|^2 \,  \frac{J_{n/2-1}(2i |y|  |\xi|)}{(2i|y||\xi|)^{n/2-1}} \, d\xi $$
where $ J_\alpha $ stands for the Bessel function of order $ \alpha.$ When the weight function $ w(y) $ is radial, its Fourier transform is given by Hankel transform and hence  we have
$$  \int_{\C^n} |F(z)|^2 \, w(y) dz  = \int_{\R^n}  O_{|F|^2}(y) \, w(y) \,dy = c_n \int_{\R^n} |\widehat{f}(\xi)|^2 \,  \widehat{w}(2i\xi) \, d\xi .$$
By defining $ q $ by the realtion $ \widehat{q}(\xi) =( w(2i\xi))^{-1/2} $ we can factorise $ f $ as $ f = h \ast q $  and from the above relation we get
$$ c_n \int_{\R^n} |\widehat{h}(\xi)|^2  \, d\xi  =  \int_{\C^n} |F(z)|^2 \, w(y) dz .$$
Thus we see that $ \mathcal{B}(\C^n, w) $ can be identified with the image of $ L^2(\R^n) $ under the transform $ h(x) \rightarrow h \ast q(z).$  We also note that the relation between $ q $ and $ w$ allows to calculate $ q\ast q $ in terms of $ w $ as follows:
$$ q\ast q(2iy) = c_n \int_{\R^n}  e^{2 y \cdot \xi} \, \widehat{w}(2i\xi)^{-1} \, d\xi.$$
From this, it is clear that $ q \ast q(2iy) $ is a radial increasing function.
The above arguments are formal  but we can make them rigorous by imposing certain  assumptions either on $ w $ or on $ q.$ \\

We can prove an analogue of Theorem \ref{B-E-1} for the Heisenberg group, the formulation of which requires the setting of twisted Bergman spaces. Given a positive radial weight function $ w_\lambda(y,v) $ on $ \R^{2n} $ we consider the twisted Bergman  space $ \widetilde{\mathcal{B}}_\lambda(\C^{2n}, w_\lambda) $ consisting of entire functions $ F(z,w)$ on $ \C^{2n} $ such that
$$   \| F\|^2 = \int_{\C^{2n}} |F(z,w)|^2 w_\lambda(y,v) \, e^{\lambda (u \cdot y- v \cdot x)} \, dz dw < \infty.$$
Such spaces are invariant under twisted translations and we restrict ourselves to the  situation where they can be realised as the image of $ L^2(\R^{2n}) $ under the map $ \widetilde{T}_{q_\lambda}f(z,w) =  f \ast_\lambda q_\lambda(z,w) $ for a nice kernel  function $ q_\lambda(x,u)$ in such a way that
\begin{equation}\label{iso} \int_{\C^{2n}} | f\ast_\lambda q_\lambda(z,w)|^2 w_\lambda(y,v) \, e^{\lambda (u \cdot y- v \cdot x)} \, dz dw  = c_n \int_{\R^{2n}} |f(x,u)|^2 dx du.
\end{equation} Under these assumptions, the reproducing kernel for  the space $ \widetilde{\mathcal{B}}_\lambda(\C^{2n}, w_\lambda) $ takes the form 
$$ K((z,w),(z^\prime,w^\prime)) = e^{-\lambda ( w \cdot z^\prime- z \cdot w^\prime)} q_\lambda \ast_\lambda q_\lambda(z-\bar{z^\prime},w-\bar{w^\prime}).$$
With these notations, we  formulate and prove  the following analogue of Theorem \ref{B-E-1}.\\

\begin{thm}\label{B-H-1}  Let  $ q \in L^1(\He^n) $ be such that for every  $ \lambda $ the image of $ L^2(\R^{2n}) $ under the transform $ \widetilde{T}_{q^\lambda}f(z,w) =  f \ast_\lambda q^\lambda(z,w) $ is a twisted Bergman  space $ \widetilde{\mathcal{B}}_\lambda(\C^{2n}, w_\lambda) $  where $ w_\lambda$ is radial. Then there is no nontrivial $ f \in L^2(\He^n) $ such that for all $ \lambda \in \R^\ast$
$$ \int_{\R^{2n}} | f^\lambda \ast_\lambda q^\lambda(y,v)| \, \sqrt{q^\lambda \ast_\lambda q^\lambda(2iy,2iv)} \, dy dv < \infty. $$ 
\end{thm}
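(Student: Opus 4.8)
The plan is to deduce Theorem \ref{B-H-1} from Theorem \ref{B-H} by applying the latter to the group convolution $ g = f \ast q $ on $ \He^n $, whose $ \lambda$-sections are $ g^\lambda = f^\lambda \ast_\lambda q^\lambda $ and whose Fourier transform is $ \widehat{g}(\lambda) = \widehat{f}(\lambda)\widehat{q}(\lambda) $; since $ f \in L^2(\He^n) $ and $ q \in L^1(\He^n) $, we have $ g \in L^2(\He^n) $. First I would check that $ g $ satisfies the preliminary hypotheses of Theorem \ref{B-H}, namely that $ \mathcal{R}(\widehat{g}(\lambda)) \subset \mathcal{D}(\pi_\lambda(z,w)) $, that $ \pi_\lambda(z,w)\widehat{g}(\lambda) $ is of trace class, and that (\ref{condi-one}) holds for $ \widehat{g}(\lambda) $. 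All of this is furnished by the hypothesis that $ \widetilde{T}_{q^\lambda} $ maps $ L^2(\R^{2n}) $ onto the twisted Bergman space $ \widetilde{\mathcal{B}}_\lambda(\C^{2n},w_\lambda) $: membership of $ g^\lambda = \widetilde{T}_{q^\lambda}(f^\lambda) $ in this space says precisely that $ g^\lambda $ admits an entire extension $ g^\lambda(z,w) $ of finite weighted $ L^2 $-norm, which is the integrated (Gutzmer) form of (\ref{condi-one}).

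The heart of the argument is to exhibit the radial increasing majorant $ F_\lambda(y,v) = \sqrt{q^\lambda \ast_\lambda q^\lambda(2iy,2iv)} $ and to verify the pointwise bound (\ref{condi-two}) for $ \widehat{g}(\lambda) $. By the isometry (\ref{iso}), $ \widetilde{\mathcal{B}}_\lambda(\C^{2n},w_\lambda) $ is a reproducing kernel Hilbert space with kernel $ K((z,w),(z',w')) = e^{-\lambda(w\cdot z' - z\cdot w')} q^\lambda \ast_\lambda q^\lambda(z-\bar{z'},w-\bar{w'}) $, and $ \|g^\lambda\|^2 = c_n\|f^\lambda\|_2^2 $. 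Substituting $ (z',w') = (z,w) $, using $ z - \bar{z} = 2iy $ and $ w - \bar{w} = 2iv $ and evaluating the exponential on the diagonal, one finds $ K((z,w),(z,w)) = e^{\lambda(u\cdot y - v\cdot x)} q^\lambda \ast_\lambda q^\lambda(2iy,2iv) $, so the reproducing inequality $ |g^\lambda(z,w)|^2 \le K((z,w),(z,w))\,\|g^\lambda\|^2 $ gives
$$ |g^\lambda(x+iy,u+iv)| \le C_\lambda\, e^{\frac{1}{2}\lambda(u\cdot y - v\cdot x)}\sqrt{q^\lambda \ast_\lambda q^\lambda(2iy,2iv)}. $$
On the other hand, the Weyl inversion formula for the holomorphic extension reads $ g^\lambda(z,w) = c_\lambda\, \tr\big(\pi_\lambda(z,w)^\ast \widehat{g}(\lambda)\big) $, so after dividing by $ c_\lambda $ the last display is exactly (\ref{condi-two}) with the stated $ F_\lambda $.

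It remains to confirm that $ F_\lambda(y,v) = \sqrt{q^\lambda \ast_\lambda q^\lambda(2iy,2iv)} $ is radial and increasing. As in the Euclidean computation recorded before Theorem \ref{B-E-1}, the relation between $ q^\lambda $ and $ w_\lambda $ — obtained here from the twisted analogue of the orbital-integral calculation via special Hermite expansions — represents $ q^\lambda \ast_\lambda q^\lambda(2iy,2iv) $ as the integral of a positive increasing exponential against the (radial, since $ w_\lambda $ is radial) reciprocal of the Fourier--Hankel transform of $ w_\lambda $, which is manifestly radial and increasing in $ |(y,v)| $. Granting (\ref{condi-two}), the radial increasing property, and the observation that the hypothesis of the present theorem is exactly condition (\ref{mod-b-h-1}) for $ g $ with this choice of $ F_\lambda $, the Remark following Theorem \ref{B-H} applies and forces $ g = 0 $, that is $ g^\lambda = f^\lambda \ast_\lambda q^\lambda = 0 $ for every $ \lambda $. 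Since $ \widetilde{T}_{q^\lambda} $ is an isometry and hence injective, $ f^\lambda = 0 $ for all $ \lambda $, whence $ f = 0 $.

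The step I expect to be the main obstacle is the reproducing-kernel bookkeeping of the second paragraph: verifying that the diagonal value of $ K $ carries exactly the factor $ e^{\lambda(u\cdot y - v\cdot x)} $ needed to match the $ e^{\frac{1}{2}\lambda(u\cdot y - v\cdot x)} $ in (\ref{condi-two}), and, relatedly, making rigorous under suitable decay assumptions on $ q^\lambda $ (or $ w_\lambda $) the passage from the formal identity for $ q^\lambda \ast_\lambda q^\lambda(2iy,2iv) $ to its radial increasing character. Once these are in place, the remainder is a direct appeal to Theorem \ref{B-H} and the isometry (\ref{iso}).
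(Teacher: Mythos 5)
Your proposal is correct and takes essentially the same route as the paper: the paper's proof of Theorem \ref{B-H-1} is exactly the combination you assemble, namely the Remark following Theorem \ref{B-H} with the majorant $F_\lambda(y,v)=\sqrt{q^\lambda\ast_\lambda q^\lambda(2iy,2iv)}$, the reproducing-kernel estimate $|g\ast_\lambda q_\lambda(z,w)|^2\le C\, q_\lambda\ast_\lambda q_\lambda(2iy,2iv)\, e^{\lambda(u\cdot y-v\cdot x)}$ of Section 2.4 applied to $g=f\ast q$, and injectivity of $\widetilde{T}_{q^\lambda}$ coming from the isometry (\ref{iso}). Your diagonal evaluation of the kernel, retaining the factor $e^{\lambda(u\cdot y-v\cdot x)}$, agrees with the estimate the paper actually uses (in Section 2.4 and in the Hardy deduction of Section 3.3), notwithstanding the typographical inconsistencies in the paper's displayed kernel formulas.
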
 

The twisted Bergman spaces were introduced by Kr\"otz-Thangavelu-Xu \cite{KTX} in the context of Segal-Bargmann transform $ T_bf(z,w,\zeta) = f \ast  p_b(z,w,\zeta) $ where $ p_b $ is the heat kernel associated to 
 the sublaplacian $ \mathcal{L} $ on $ \He^n.$  The associated twisted Bergman spaces correspond to the kernel $ p_b^\lambda $ and the weight function is given by 
 $ w_\lambda(y,v) = p_{2b}^\lambda(2y,2v).$ As in the Euclidean case the reproducing kernel is given by $ p_{2b}^\lambda(2iy,2iv).$ As a consequence, the condition in Theorem \ref{B-H-1} takes the form
 $$ \int_{\R^{2n}} | f^\lambda \ast_\lambda p_b^\lambda(y,v)| \, \sqrt{p_{2b}^\lambda(2iy,2iv)} \, dy dv < \infty. $$ 
Recall that $ \widehat{p_b}(\lambda) = e^{-b H(\lambda)} $ where $ H(\lambda) = -\Delta+\lambda^2 |x|^2 $ is the scaled Hermite operator on $ \R^n$  and Hardy's theorem for the Heisenberg group is formulated in terms of the Hermite semigroup $ e^{-bH(\lambda)}.$  The following analogue of Hardy's theorem proved in \cite{T2} follows from Theorem \ref{B-H-1} as an easy corollary. \\
 
 \begin{thm}[Hardy]\label{Hardy} 
Suppose a function $ f $ on $ \He^n $ satisfies the following two conditions: (i) $ |f(x,u,t)| \leq p_a(x,u,t) $ (ii)  $\hat{f}(\lambda)^\ast \hat{f}(\lambda) \leq C e^{-2b H(\lambda)}.$ Then $ f = 0 $ whenever $ a < b.$\\
\end{thm}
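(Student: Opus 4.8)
The plan is to derive Theorem~\ref{Hardy} from Theorem~\ref{B-H-1} by taking $q$ to be the heat kernel $p_b$ for the sublaplacian $\mathcal{L}$ on $\He^n$. With this choice $q^\lambda=p_b^\lambda$, and the semigroup property of the Hermite semigroup, $\widehat{p_b}(\lambda)=e^{-bH(\lambda)}$ with $e^{-bH(\lambda)}e^{-bH(\lambda)}=e^{-2bH(\lambda)}$, gives $p_b^\lambda\ast_\lambda p_b^\lambda=p_{2b}^\lambda$; hence the reproducing-kernel factor in Theorem~\ref{B-H-1} is exactly $\sqrt{p_{2b}^\lambda(2iy,2iv)}$, as recorded in the discussion preceding the statement. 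It then suffices to (a) check that under our hypotheses $f^\lambda$ really lies in the image of $\widetilde{T}_{p_b^\lambda}$, i.e. in the twisted Bergman space $\widetilde{\mathcal{B}}_\lambda(\C^{2n},w_\lambda)$ with $w_\lambda(y,v)=p_{2b}^\lambda(2y,2v)$, so that Theorem~\ref{B-H-1} (and the Kr\"otz--Thangavelu--Xu realization \cite{KTX}) applies, and (b) verify for every $\lambda\in\R^\ast$ the integrability
\[ \int_{\R^{2n}} |f^\lambda(y,v)|\, \sqrt{p_{2b}^\lambda(2iy,2iv)}\, dy\,dv < \infty. \]

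For (a) I would use hypothesis (ii). Since the square root is operator monotone, $\widehat{f}(\lambda)^\ast\widehat{f}(\lambda)\le C\,e^{-2bH(\lambda)}$ gives $|\widehat{f}(\lambda)|\le\sqrt{C}\,e^{-bH(\lambda)}$, and conjugating by $e^{bH(\lambda)}$ shows that $M_\lambda:=\widehat{f}(\lambda)e^{bH(\lambda)}$ is bounded with $\|M_\lambda\|\le\sqrt{C}$. Thus $\widehat{f}(\lambda)=M_\lambda e^{-bH(\lambda)}$, i.e. $f^\lambda=g^\lambda\ast_\lambda p_b^\lambda$ with $\pi_\lambda(g^\lambda)=M_\lambda$, placing $f^\lambda$ in the required Bergman space. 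For the weight in Theorem~\ref{B-H} I would invoke the Remark: by $\pi_\lambda(iy,iv)\pi_\lambda(x,u)=\pi_\lambda(x+iy,u+iv)e^{-\frac12\lambda(u\cdot y-v\cdot x)}$ and unitarity of $\pi_\lambda(x,u)$,
\[ |\tr(\pi_\lambda(x+iy,u+iv)^\ast\widehat{f}(\lambda))|\le e^{\frac12\lambda(u\cdot y-v\cdot x)}\,\|\pi_\lambda(iy,iv)\widehat{f}(\lambda)\|_1, \]
so $F_\lambda(y,v)=\|\pi_\lambda(iy,iv)\widehat{f}(\lambda)\|_1$ is admissible in \eqref{condi-two}, while the reproducing-kernel identity
\[ \|\pi_\lambda(iy,iv)e^{-bH(\lambda)}\|_{HS}^2=\tr\big(\pi_\lambda(2iy,2iv)e^{-2bH(\lambda)}\big)=c_\lambda\, p_{2b}^\lambda(2iy,2iv) \]
pins the model growth of this weight to $\sqrt{p_{2b}^\lambda(2iy,2iv)}$. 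For the first factor in (b) I would use hypothesis (i): since $f^\lambda(y,v)=\int_{\R} f(y,v,t)e^{i\lambda t}\,dt$, condition (i) gives $|f^\lambda(y,v)|\le\int_{\R}p_a(y,v,t)\,dt=:p_a^{0}(y,v)$, a Euclidean Gaussian of rate $(4a)^{-1}$, which I would then integrate against the explicit Gaussian $\sqrt{p_{2b}^\lambda(2iy,2iv)}$, the convergence being forced by the strict inequality $a<b$. Once the integral is finite for all $\lambda$, Theorem~\ref{B-H-1} returns $f=0$.

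The hard part will be the quantitative estimate in (b), and it is precisely where $a<b$ (rather than $a\le b$) must enter. The kernel $\sqrt{p_{2b}^\lambda(2iy,2iv)}$ grows like $\exp\!\big(\tfrac12|\lambda|\coth(2|\lambda|b)(|y|^2+|v|^2)\big)$, a rate that \emph{increases} with $|\lambda|$, whereas the crude bound $p_a^{0}$ decays at the $\lambda$-independent rate $(4a)^{-1}$. For small $|\lambda|$ the kernel rate tends to $(4b)^{-1}<(4a)^{-1}$ and the $(y,v)$-integral converges at once, exactly as in the Euclidean reduction; but for large $|\lambda|$ these two rates cannot be compared using (i) alone, so conditions (i) and (ii) must be coupled. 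Concretely, one must push the bounded factor $M_\lambda$ past the unbounded self-adjoint $\pi_\lambda(iy,iv)$ to obtain a genuine bound $\|\pi_\lambda(iy,iv)\widehat{f}(\lambda)\|_1\le C_\lambda\sqrt{p_{2b}^\lambda(2iy,2iv)}$, and feed back the smoothing implicit in (i) so that the full product $|f^\lambda|\,\sqrt{p_{2b}^\lambda(2iy,2iv)}$ is integrable for \emph{every} $\lambda$. I expect this trace-norm estimate together with the accompanying $\lambda$-uniform control to be the main obstacle; the remaining steps are explicit Gaussian integrations and bookkeeping of the constants $c_\lambda$, $C$, $a$, $b$.
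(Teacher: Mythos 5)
Your overall strategy---factor $\widehat{f}(\lambda)$ through the Hermite semigroup, place $f^\lambda$ in a twisted Bergman space, and feed the Gaussian bound from (i) into a Beurling condition---is indeed the paper's strategy, but two of your steps have genuine gaps, and the second is fatal to the proof as proposed. First, the factorization $\widehat{f}(\lambda)=M_\lambda e^{-bH(\lambda)}$ with $M_\lambda$ merely \emph{bounded} does not place $f^\lambda$ in $\widetilde{\mathcal{B}}_\lambda(\C^{2n},w_\lambda)$ with $w_\lambda=p_{2b}^\lambda(2y,2v)$: by Theorem~\ref{twist-berg} that space is exactly the image of $L^2(\R^{2n})$ under $g\mapsto g\ast_\lambda p_b^\lambda$, which corresponds to $M_\lambda$ being \emph{Hilbert--Schmidt}. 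For example $\widehat{f}(\lambda)=e^{-bH(\lambda)}$, i.e.\ $f^\lambda=p_b^\lambda$, satisfies your factorization with $M_\lambda=I$ but does not lie in that space, and the reproducing-kernel pointwise bound fails for it. This is precisely where the strict inequality $a<b$ first enters in the paper: pick $a<b^\prime<b$ and write $\widehat{f}(\lambda)=\big(B_\lambda e^{-(b-b^\prime)H(\lambda)}\big)e^{-b^\prime H(\lambda)}=B^\prime_\lambda e^{-b^\prime H(\lambda)}$, where $B^\prime_\lambda$ is Hilbert--Schmidt (bounded times Hilbert--Schmidt). Then $f^\lambda=g_\lambda\ast_\lambda p_{b^\prime}^\lambda$ with $g_\lambda\in L^2(\R^{2n})$, hence $|f^\lambda(z,w)|^2\le c_\lambda e^{\lambda(u\cdot y-v\cdot x)}p_{2b^\prime}^\lambda(2iy,2iv)$, and by the Remark following Theorem~\ref{B-H} one may take $F_\lambda(y,v)=\big(p_{2b^\prime}^\lambda(2iy,2iv)\big)^{1/2}$ in \eqref{mod-b-h-1}. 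In particular the trace-norm estimate $\|\pi_\lambda(iy,iv)\widehat{f}(\lambda)\|_1\le C_\lambda\sqrt{p_{2b}^\lambda(2iy,2iv)}$ that you single out as the main obstacle is never needed: the paper works with the flexible weight $F_\lambda$, not with $w_\lambda(\widehat{f},(y,v))$.

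Second, and more seriously, your plan has no mechanism for handling large $|\lambda|$. You correctly diagnose that the bound from (i) alone, $|f^\lambda(y,v)|\le C_a e^{-\frac{1}{4a}(|y|^2+|v|^2)}$, cannot beat the kernel growth $e^{\frac12\lambda\coth(2b^\prime\lambda)(|y|^2+|v|^2)}$ once $\lambda$ is large, but the remedy you hope for ($\lambda$-uniform control obtained by coupling (i) and (ii)) is not available and is not what the paper does. The paper's resolution is an analyticity argument: the heat kernel bound $|p_a(y,v,t)|\le c_n a^{-n-1}e^{-\frac{d}{a}(|(y,v)|^2+|t|)}$ together with (i) shows that $\lambda\mapsto f^\lambda$ extends to an $L^2(\R^{2n})$-valued holomorphic function on the strip $|\Im \lambda|<d/a$, so it suffices to prove $f^\lambda=0$ for $0<\lambda<\delta$. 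Since $\lambda\coth(2b^\prime\lambda)\to(2b^\prime)^{-1}<(2a)^{-1}$ as $\lambda\to0$, the Beurling integral \eqref{mod-b-h-1} is finite for all sufficiently small $\lambda$, Hedenmalm's argument run at each such fixed $\lambda$ gives $f^\lambda=0$ there, and analytic continuation then kills every $\lambda$. Note that this uses the per-$\lambda$ conclusion contained in the \emph{proof} of Theorem~\ref{B-H}; you cannot invoke Theorem~\ref{B-H-1} as a black box, because its hypothesis must hold for every $\lambda\in\R^\ast$, which is exactly what you cannot verify. Without the intermediate exponent $b^\prime$ and the analyticity-in-$\lambda$ step, the proposal does not close.
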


Ideally, we would like to replace  the condition (i) by $ |f^\lambda(x,u)| = c_\lambda \,p_a^\lambda(x,u) $ to arrive at the same conclusion.  We remark that this has been already proved by other means, see  Theorem 2.9.4 in \cite{T2}. However, we are not able to verify the Beurling condition in this situation. The conclusion remains valid even if we  replace (i)  by $ f(x,u,t) = g \ast_3 p_a(x,u,t) $ for some function $ g $ on $ \He^n $ for which $ g^\lambda \in L^\infty(\R^{2n}) $ for all $ \lambda.$ Here we have used the notation $ g \ast_3 h $ to stand for the convolution in the central variable. By taking the inverse Fourier transform in the central variable, we see that $ |f^\lambda(x,u)| = |g^\lambda(x,u)| p_a^\lambda(x,u) \leq \| g^\lambda\|_\infty \, p_a^\lambda(x,u)$ which is enough to prove the above theorem by other means. Moreover, by a theorem  of Douglas \cite{RD}, condition (ii) takes the form $ \widehat{f}(\lambda) = B_\lambda e^{-bH(\lambda)} $ for some bounded linear operator $ B_\lambda.$  By replacing $ g^\lambda $ by $ L^2 $ functions and $ B_\lambda $ by Hilbert-Schmidt operators, we get an analogue of Cowling-Price theorem on the Heisenberg group.  For the general $ L^p-L^q $ version  of Cowling-Price theorem, with a different proof,  we refer to the work  Parui-Thangavelu \cite{PT}.\\

 \begin{thm}[Cowling-Price]\label{C-P} 
Suppose a function $ f $ on $ \He^n $ satisfies the following two conditions:  for every $ \lambda \in \R^\ast,$ (i) $ f(x,u,t)  =  h(x,u,t) \, p_a(x,u,t) $ (ii)  $ \hat{f}(\lambda) =  B_\lambda \,e^{-b H(\lambda)}$ where $ h \in L^2(\He^n) $ and $ B_\lambda $ are Hilbert-Schmidt.  Then $ f = 0 $ whenever $ a < b.$\\
\end{thm}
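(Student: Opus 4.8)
The plan is to deduce Theorem \ref{C-P} from Theorem \ref{B-H-1} by verifying its hypotheses for the heat-kernel transform $ T_b $ with kernel $ q^\lambda = p_b^\lambda $, exactly along the lines sketched for Hardy's theorem in the paragraph preceding Theorem \ref{Hardy}. First I would recall that, as noted in the excerpt, $ \widehat{p_b}(\lambda) = e^{-bH(\lambda)} $, and that by the theory of Kr\"otz-Thangavelu-Xu \cite{KTX} the image of $ L^2(\R^{2n}) $ under $ \widetilde{T}_{p_b^\lambda} $ is precisely the twisted Bergman space $ \widetilde{\mathcal{B}}_\lambda(\C^{2n}, w_\lambda) $ with the radial weight $ w_\lambda(y,v) = p_{2b}^\lambda(2y,2v) $, and that the reproducing kernel is $ p_{2b}^\lambda(2iy,2iv) = q^\lambda \ast_\lambda q^\lambda(2iy,2iv) $. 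This identifies the abstract transform of Theorem \ref{B-H-1} with the concrete Segal-Bargmann transform and reduces everything to checking the integrability condition of that theorem for $ a < b $.

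Next I would unwind hypotheses (i) and (ii). From (ii), $ \widehat{f}(\lambda) = B_\lambda e^{-bH(\lambda)} $ with $ B_\lambda $ Hilbert-Schmidt; since $ e^{-bH(\lambda)} $ is the Fourier transform of $ p_b^\lambda $, taking the inverse Fourier transform in the central variable gives the factorization $ f^\lambda = \varphi^\lambda \ast_\lambda p_b^\lambda $ for a suitable $ \varphi^\lambda \in L^2(\R^{2n}) $ whose special Hermite (Weyl) transform is $ B_\lambda $. In other words, $ f^\lambda $ lies in the image of $ L^2 $ under $ \widetilde{T}_{p_b^\lambda} $, so $ f^\lambda \ast_\lambda p_b^\lambda = \widetilde{T}_{p_b^\lambda}\varphi^\lambda $ is the holomorphic extension of an element of the Bergman space. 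The content of hypothesis (i), $ f = h\, p_a $ with $ h \in L^2 $, is that the pointwise size of $ f^\lambda $ is controlled by $ p_a^\lambda $; combined with the $ L^2 $ factor $ h^\lambda $ this is the genuinely Cowling-Price (as opposed to Hardy) feature and is what lets $ f $ be square-integrable rather than pointwise-bounded by the kernel.

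The integrability condition in Theorem \ref{B-H-1} that I must verify is
\begin{equation}\label{cp-verify}
\int_{\R^{2n}} | f^\lambda \ast_\lambda p_b^\lambda(y,v)| \, \sqrt{p_{2b}^\lambda(2iy,2iv)} \, dy\, dv < \infty.
\end{equation}
I would estimate this by Cauchy-Schwarz, splitting the integrand as the product of $ | f^\lambda \ast_\lambda p_b^\lambda(y,v)|\, \sqrt{w_\lambda(y,v)} $ and $ \sqrt{p_{2b}^\lambda(2iy,2iv)/w_\lambda(y,v)} $. The square of the first factor integrates over $ \C^{2n} $ (after restoring the $ e^{\lambda(u\cdot y - v\cdot x)} $ Gaussian in $ (x,u) $) to the finite quantity $ c_n\|\varphi^\lambda\|_2^2 $ by the isometry \eqref{iso}; this is where the $ L^2 $ hypothesis enters decisively. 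The integral of the square of the second factor is a purely Gaussian computation in $ (y,v) $ comparing $ p_{2b}^\lambda(2iy,2iv) $ with $ w_\lambda(y,v) = p_{2b}^\lambda(2y,2v) $, and converges precisely because of the heat-kernel's Gaussian decay. The main obstacle I anticipate is the bookkeeping around the non-commutative twisted convolution and the precise constants in the reproducing-kernel and isometry formulas for the special Hermite setting, together with justifying the passage from the stated operator factorization $ \widehat{f}(\lambda) = B_\lambda e^{-bH(\lambda)} $ (via Douglas's theorem \cite{RD}) to the honest $ L^2 $ twisted-convolution factorization $ f^\lambda = \varphi^\lambda \ast_\lambda p_b^\lambda $; once that factorization is in hand, \eqref{cp-verify} follows and Theorem \ref{B-H-1} forces $ f = 0 $ for $ a < b $.
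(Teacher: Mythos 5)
Your overall strategy (reduce to the twisted Bergman space of Kr\"otz--Thangavelu--Xu with kernel $p_b^\lambda$ and invoke the Beurling theorem) is indeed the paper's, but your verification of the integrability condition has a genuine gap, and the gap is visible at a glance: nowhere in your estimate do you use hypothesis (i) or the inequality $a<b$. Hypothesis (ii) alone cannot force $f=0$: for any $g\in L^1\cap L^2(\He^n)$ the generically nonzero function $f=g\ast p_b$ satisfies $\widehat{f}(\lambda)=\widehat{g}(\lambda)\,e^{-bH(\lambda)}$ with $\widehat{g}(\lambda)$ Hilbert--Schmidt. Concretely, both halves of your Cauchy--Schwarz split fail. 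For the first factor, the quantity $\int_{\R^{2n}}|f^\lambda\ast_\lambda p_b^\lambda(y,v)|^2\,w_\lambda(y,v)\,dy\,dv$ is \emph{not} controlled by the isometry (\ref{iso}): there the integration runs over all of $\C^{2n}$ and the weight $w_\lambda(y,v)e^{\lambda(u\cdot y-v\cdot x)}$ is a function of the \emph{imaginary} parts of $(z,w)$, whereas your integral evaluates the holomorphic function on the real slice and the weight at those same real points; no comparison between these two quantities is available. For the second factor, the explicit heat kernel formula gives
$$ \frac{p_{2b}^\lambda(2iy,2iv)}{p_{2b}^\lambda(2y,2v)}=e^{2\lambda\coth(2b\lambda)\,(|y|^2+|v|^2)}, $$
so that integral \emph{diverges}: the reproducing kernel at imaginary arguments grows like a Gaussian, it does not decay. (There is also a slip of notation: with $f^\lambda=\varphi^\lambda\ast_\lambda p_b^\lambda$ one has $\widetilde{T}_{p_b^\lambda}\varphi^\lambda=f^\lambda$, while $f^\lambda\ast_\lambda p_b^\lambda=\varphi^\lambda\ast_\lambda p_{2b}^\lambda$ has strictly worse Gaussian decay; this is one reason the paper deduces Cowling--Price from Theorem \ref{B-H} together with its Remark, imposing the condition (\ref{mod-b-h-1}) on $f^\lambda$ itself, rather than from the literal statement of Theorem \ref{B-H-1}.)

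The two hypotheses must play separate roles, and $a<b$ must enter quantitatively. Hypothesis (ii) is used only to produce the majorant: $f^\lambda=g_\lambda\ast_\lambda p_b^\lambda$ with $g_\lambda\in L^2(\R^{2n})$ places $f^\lambda$ in the twisted Bergman space, and the reproducing kernel bound gives (\ref{condi-two}) with $F_\lambda(y,v)=\big(p_{2b}^\lambda(2iy,2iv)\big)^{1/2}$, which grows like $e^{\frac{1}{2}\lambda\coth(2b\lambda)(|y|^2+|v|^2)}$. Hypothesis (i) is used to produce the decay: by Cauchy--Schwarz in $t$ and Plancherel in the central variable,
$$ |f^\lambda(y,v)|\le \|h(y,v,\cdot)\|_2\Big(\int_{-\infty}^\infty (p_a^\lambda(y,v))^2\,d\lambda\Big)^{1/2}\le C\,\|h(y,v,\cdot)\|_2\,e^{-\frac{1}{4a}(|y|^2+|v|^2)}. $$
The Beurling integral (\ref{mod-b-h-1}) is then finite precisely when $\frac{1}{2}\lambda\coth(2b\lambda)<\frac{1}{4a}$, and since $\lambda\coth(2b\lambda)\to\frac{1}{2b}$ as $\lambda\to0$ and $a<b$, this holds for $0<|\lambda|<\delta$ --- but it fails for large $\lambda$, because $\lambda\coth(2b\lambda)\to\infty$. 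Hence the Beurling theorem yields only $f^\lambda=0$ for small $\lambda$, and a final step, which you omitted, is indispensable: the bound $|f|\le |h|\,p_a$ together with the heat kernel estimate $p_a(y,v,t)\le c_n a^{-n-1}e^{-\frac{d}{a}(|(y,v)|^2+|t|)}$ shows that $\lambda\mapsto f^\lambda$ extends holomorphically to the strip $|\Im\lambda|<d/a$ as an $L^2(\R^{2n})$-valued function, so vanishing on an interval forces $f\equiv0$. Without this analyticity argument the proof cannot close, since the integrability condition genuinely fails for all large $\lambda$.
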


\begin{rem} As every Hilbert-Schmidt operator  is the Weyl transform of an $ L^2(\R^{2n})$ function we can state the conditions of the above theorem as: (i)  $ f = h  p_a,$ and (ii)  $  f = g \ast p_b, $ where $ g,  h  \in L^2(\He^n).$ We also remark that the case $ a = b $ remains open.
\end{rem}

The same ideas lead to an interesting version of Beurling's theorem  for Hermite expansions.  Given $ f \in L^2(\R^n) $ we let $ P_kf $ stand for the orthogonal projection of $ f $ onto the eigenspace of the Hermite operator $ H $ with eigenvalue $ (2k+n).$ The Plancherel theorem for Hermite expansions reads as
\begin{equation} \| f\|_2^2 = \sum_{k=0}^\infty  \|P_kf\|_2^2, \,\,\, f(x) = \sum_{k=0}^\infty P_kf(x).
\end{equation}
Let $ L_k^{n-1}(t) $ stand for the Laguerre polynomials of type $ (n-1) $ and define the Laguerre functions $ \varphi_k^{n-1}(z) $ on $ \C^n$ by
$$ \varphi_k^{n-1}(z) = L_k^{n-1}(\frac{1}{2} z^2) e^{-\frac{1}{4} z^2},\,\,\, z^2 = \sum_{j=1}^n z_j^2.$$
Then we can prove the following theorem for Hermite expansions:

\begin{thm}[Beurling-Hermite]\label{beur-her} There is no non trivial $ f \in L^2(\R^n) $ for which
\begin{equation}\label{cond-beur-her}
\sum_{k=0}^\infty \int_{\R^n} |f(y)| \|P_kf\|_2 \sqrt{ \varphi_k^{n-1}(2iy)}  dy < \infty.
\end{equation}
\end{thm}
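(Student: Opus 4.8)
The plan is to reduce the statement to the flexible form of the Euclidean Beurling theorem (Theorem \ref{B-E}, i.e. the modified condition (\ref{mod-b}) together with Hedenmalm's argument) by producing a single radial, increasing weight that dominates the holomorphic extension of $f$. Writing $f=\sum_\alpha\langle f,\Phi_\alpha\rangle\Phi_\alpha$ in terms of the Hermite functions $\Phi_\alpha$, so that $\|P_kf\|_2^2=\sum_{|\alpha|=k}|\langle f,\Phi_\alpha\rangle|^2$, I would set
\[ W(y)=\sum_{k=0}^\infty\|P_kf\|_2\,\sqrt{\varphi_k^{n-1}(2iy)}. \]
By Tonelli, the hypothesis (\ref{cond-beur-her}) says exactly that $\int_{\R^n}|f(y)|\,W(y)\,dy<\infty$. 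Since $\varphi_k^{n-1}(2iy)=L_k^{n-1}(-2|y|^2)e^{|y|^2}$ and the Laguerre polynomials $L_k^{n-1}$ have nonnegative coefficients, each summand is radial and increasing in $|y|$, so $W$ is radial and increasing. Thus the structural requirements of (\ref{mod-b}) are automatic, and everything hinges on showing that $f$ extends holomorphically and that $\sup_{x}|f(x+iy)|\le C\,W(y)$.

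The engine for both facts is a Gutzmer-type identity for ordinary Hermite expansions, the analogue for Theorem \ref{beur-her} of the special-Hermite Gutzmer formula used for Theorems \ref{B-H} and \ref{B-H-1}. Writing $\mathcal P_k(z,w)=\sum_{|\alpha|=k}\Phi_\alpha(z)\Phi_\alpha(w)$ for the holomorphically extended kernel of $P_k$ and using $\overline{\Phi_\alpha(x+iy)}=\Phi_\alpha(x-iy)$, Mehler's formula for $\sum_k\mathcal P_k(z,w)r^k$ evaluated at $z=x+iy$, $w=\overline{x+iy}$ gives
\[ \sum_{k=0}^\infty\Big(\sum_{|\alpha|=k}|\Phi_\alpha(x+iy)|^2\Big)r^k=\frac{1}{\pi^{n/2}(1-r^2)^{n/2}}\,e^{-\frac{1-r}{1+r}|x|^2}\,e^{\frac{1+r}{1-r}|y|^2}. \]
Integrating in $x\in\R^n$ (a Gaussian integral) collapses the prefactor to $(1-r)^{-n}$ and reproduces the Laguerre generating function, yielding the clean identity, which I will call $(\star)$,
\[ \int_{\R^n}\sum_{|\alpha|=k}|\Phi_\alpha(x+iy)|^2\,dx=\varphi_k^{n-1}(2iy). \]
This exhibits $\sqrt{\varphi_k^{n-1}(2iy)}$ as the $L^2(dx)$-mass of the level-$k$ kernel and is the exact point where the weight in (\ref{cond-beur-her}) enters.

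From here I would bound the extension level by level. By Cauchy--Schwarz in $\ell^2$ over $\{|\alpha|=k\}$,
\[ |f(x+iy)|\le\sum_{k=0}^\infty\|P_kf\|_2\Big(\sum_{|\alpha|=k}|\Phi_\alpha(x+iy)|^2\Big)^{1/2}. \]
The crucial step, and the one I expect to be the main obstacle, is the pointwise kernel bound
\[ \sum_{|\alpha|=k}|\Phi_\alpha(x+iy)|^2\le C\,\varphi_k^{n-1}(2iy)\quad\text{uniformly in }x, \]
which I will call $(\dagger)$: that the diagonal of the extended kernel is controlled by its $x$-average from $(\star)$. This is a genuine $L^\infty$-versus-$L^2$ inequality in $x$ and does \emph{not} follow formally, since the peak of a level-$k$ function need not be dominated by its total mass; it must be extracted from the explicit structure, for instance by showing that the generating function of the left side is dominated coefficientwise by that of the right, or from known pointwise estimates for Hermite functions and their holomorphic extensions. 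Granting $(\dagger)$, the displayed Cauchy--Schwarz bound gives $\sup_x|f(x+iy)|\le C\,W(y)$, and in particular the Hermite series converges locally uniformly wherever $W(y)<\infty$, so $f$ extends holomorphically there.

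Finally I would tie up the holomorphy and invoke Hedenmalm's argument. The finiteness of $\int|f|W$ forces $W(y)<\infty$ for $|y|$ below some radius $\rho>0$; if $\rho<\infty$ then $f$ vanishes a.e. where $W=\infty$, and since the tube extension already makes $f$ real-analytic on $\R^n$, the identity theorem gives $f\equiv0$. Hence we may assume $W(y)<\infty$ for all $y$, so $f$ is entire on $\C^n$, $\sup_x|f(x+iy)|\le C\,W(y)$ with $W$ radial and increasing, and $\int_{\R^n}|f(y)|W(y)\,dy<\infty$. These are precisely the hypotheses of the modified Beurling condition (\ref{mod-b}); by the Remark following Theorem \ref{B-H} the weight need not be literally built from a Fourier transform, only radial, increasing and dominating. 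Hedenmalm's proof (recalled in subsection 3.1) then applies and yields $f=0$. The whole weight-matching and domination scheme rests on $(\star)$ and $(\dagger)$, the latter being the only non-formal ingredient.
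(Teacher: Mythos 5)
Your proposal follows the same skeleton as the paper's proof --- Cauchy--Schwarz over $\{|\alpha|=k\}$, the radial increasing weight $W(y)=\sum_{k}\|P_kf\|_2\sqrt{\varphi_k^{n-1}(2iy)}$, holomorphic extension of $f$ into a tube, and Hedenmalm's argument --- but it has a genuine gap at exactly the point you flagged: the inequality $(\dagger)$, namely $\sum_{|\alpha|=k}|\Phi_\alpha(x+iy)|^2\le C\,\varphi_k^{n-1}(2iy)$ uniformly in $x$, is never proved; you write ``Granting $(\dagger)$'' and proceed. Since $\sum_{|\alpha|=k}|\Phi_\alpha(z)|^2=\Phi_k(z,\bar z)$, the inequality $(\dagger)$ is precisely the paper's estimate (\ref{esti-proj}), and establishing it is where essentially all of the paper's work lies; once it is in hand, the remaining steps (the bound $\sup_x|f(x+iy)|\le C\,W(y)$, the strip function $F(\zeta)=\int f(\zeta y)\overline{f(y)}\,dy$, the functional equation and Hedenmalm's argument) are routine. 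So what you have is a correct, well-organized reduction of the theorem to its key lemma, not a proof. Two minor points: your identity $(\star)$ is correct (it is essentially the Hermite--Gutzmer identity of \cite{T4}) but carries no logical weight once $(\dagger)$ is available; and the monotonicity of the weight comes from the positivity of the coefficients of $L_k^{n-1}(-2|y|^2)$ as a series in $|y|^2$ (the polynomial $L_k^{n-1}$ itself has alternating-sign coefficients), with the final appeal being to the modified condition (\ref{mod-b}) of the introduction rather than to the Remark after Theorem \ref{B-H}, which concerns the Heisenberg setting.

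The gap can be closed by the first route you suggest, and doing so reproduces the paper's argument. The paper invokes the explicit formula (Lemma 2.3 of \cite{T4})
\[
\Phi_k(z,w)=\pi^{-n/2}\sum_{j=0}^k(-1)^j L_j^{n/2-1}\big(\tfrac12(z+w)^2\big)\,L_{k-j}^{n/2-1}\big(\tfrac12(z-w)^2\big)\,e^{-\frac12(z^2+w^2)},
\]
which at $w=\bar z$ gives $\Phi_k(z,\bar z)=\pi^{-n/2}\sum_{j=0}^k(-1)^jL_j^{n/2-1}(2|x|^2)e^{-|x|^2}\,L_{k-j}^{n/2-1}(-2|y|^2)e^{|y|^2}$; then the bound $|L_j^{n/2-1}(t)|e^{-t/2}\le L_j^{n/2-1}(0)$ (1.1.39 in \cite{T0}) together with the convolution identity $\sum_{j=0}^k L_j^{n/2-1}(0)\,L_{k-j}^{n/2-1}(t)=L_k^{n-1}(t)$ (1.1.51 in \cite{T0}) yields $(\dagger)$ with $C=\pi^{-n/2}$. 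In your generating-function language this is the statement that Mehler's formula, which you already used for $(\star)$, factors (up to $\pi^{-n/2}$) as the product of $\sum_j(-1)^jL_j^{n/2-1}(2|x|^2)e^{-|x|^2}r^j=(1+r)^{-n/2}e^{-\frac{1-r}{1+r}|x|^2}$ and $\sum_jL_j^{n/2-1}(-2|y|^2)e^{|y|^2}r^j=(1-r)^{-n/2}e^{\frac{1+r}{1-r}|y|^2}$; the coefficients of the first factor are bounded in modulus by those of $(1-r)^{-n/2}$, the second factor has nonnegative coefficients, and the resulting termwise bound on the Cauchy product is exactly the coefficient sequence of $(1-r)^{-n}e^{\frac{1+r}{1-r}|y|^2}$, i.e.\ $\varphi_k^{n-1}(2iy)$. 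Had you carried out this computation, your proof would coincide with the paper's.
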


Thus we see that the role of $ e^{|(y, \xi)|} $ is played by $ \sqrt{ \varphi_k^{n-1}(2iy)} $ which is natural in view of Gutzmer's formula for the Hermite expansions. As an immediate corollary of this theorem we can obtain Hardy's theorem for Hermite expansions.\\

\begin{cor}[Hardy-Hermite] Suppose $ f \in L^2(\R^n) $ satisfies the following two conditions:
$$  |f(x)| \leq C e^{-\frac{1}{2} a |x|^2},\,\,\,\,  \|P_kf\|_2 \leq C e^{-\frac{1}{2} b(2k+n)}$$
for some $ a , b >0.$ Then $ f = 0 $ whenever $ a \tanh b >1.$
\end{cor}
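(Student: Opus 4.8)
The plan is to verify that the two Hardy hypotheses force the Beurling--Hermite integrability condition (\ref{cond-beur-her}), whence Theorem \ref{beur-her} yields $f=0$. The first step is to make the kernel $\sqrt{\varphi_k^{n-1}(2iy)}$ explicit. Since $(2iy)^2=-4|y|^2$, the definition of the Laguerre functions gives
$$ \varphi_k^{n-1}(2iy) = L_k^{n-1}(-2|y|^2)\, e^{|y|^2}, $$
and because $L_k^{n-1}(-x)=\sum_{j=0}^k \binom{k+n-1}{k-j}\frac{x^j}{j!}>0$ for $x>0$, the square root is a genuine positive quantity. Inserting the hypotheses $|f(y)|\le C e^{-\frac12 a|y|^2}$ and $\|P_kf\|_2 \le C e^{-\frac12 b(2k+n)}=C'\tau^k$ with $\tau=e^{-b}$, the condition (\ref{cond-beur-her}) is controlled by
$$ \int_{\R^n} e^{-\frac12 a|y|^2}\, e^{\frac12 |y|^2}\Big( \sum_{k=0}^\infty \tau^k \sqrt{L_k^{n-1}(-2|y|^2)}\Big)\, dy, $$
so everything reduces to estimating the inner sum over $k$.

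The difficulty is that the generating function of Laguerre polynomials produces $L_k^{n-1}$ and not its square root, so I cannot sum $\sum_k \tau^k \sqrt{L_k^{n-1}}$ directly. I would resolve this by a Cauchy--Schwarz split in $k$: writing $\tau^k=(\tau^2/\rho)^{k/2}\rho^{k/2}$ for a parameter $\rho\in(\tau^2,1)$ to be chosen, one gets
$$ \sum_{k=0}^\infty \tau^k \sqrt{L_k^{n-1}(-2|y|^2)} \le \Big(\sum_{k=0}^\infty (\tau^2/\rho)^k\Big)^{1/2}\Big(\sum_{k=0}^\infty \rho^k L_k^{n-1}(-2|y|^2)\Big)^{1/2}. $$
The first factor is the finite geometric sum $(1-\tau^2/\rho)^{-1/2}$ (this is where $\rho>\tau^2=e^{-2b}$ is needed), and the second is evaluated by the Laguerre generating function $\sum_k \rho^k L_k^{n-1}(z)=(1-\rho)^{-n}\exp(-\rho z/(1-\rho))$ at $z=-2|y|^2$, giving $(1-\rho)^{-n/2}\exp\big(\rho|y|^2/(1-\rho)\big)$.

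Combining the three $|y|^2$-exponents, the integrand is bounded by a constant (depending on $\rho$) times $\exp\big(|y|^2(\tfrac12-\tfrac a2+\tfrac{\rho}{1-\rho})\big)$, so the $y$-integral converges precisely when $\tfrac12-\tfrac a2+\tfrac{\rho}{1-\rho}<0$. The final step is to optimise over the free parameter: letting $\rho\downarrow\tau^2=e^{-2b}$ the coefficient tends to $\tfrac12(\coth b-a)$, since $\tfrac12+\tfrac{e^{-2b}}{1-e^{-2b}}=\tfrac12\coth b$. Hence whenever $a>\coth b$, equivalently $a\tanh b>1$, one can fix $\rho$ slightly larger than $e^{-2b}$ so that the exponent coefficient stays strictly negative and the integral is finite; the strict inequality $a\tanh b>1$ is exactly what provides the room to keep $\rho$ bounded away from $e^{-2b}$ while preserving negativity. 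With (\ref{cond-beur-her}) verified, Theorem \ref{beur-her} forces $f=0$. The only genuine obstacle is the square root of the Laguerre polynomial, dealt with by the Cauchy--Schwarz split; the sharp threshold constant $\coth b$ then falls out of optimising the splitting parameter $\rho$.
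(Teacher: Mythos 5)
Your proof is correct and is essentially the paper's own argument: the paper also verifies the Beurling--Hermite condition by a Cauchy--Schwarz split in $k$ followed by summing the resulting Laguerre series, which it packages as the heat kernel $p_{b'}^1(2iy,0)$ with an auxiliary parameter $0<b'<b$ chosen so that $a\tanh b'>1$. Your free parameter $\rho$ corresponds exactly to $e^{-2b'}$, and your generating-function evaluation is the same computation that produces the paper's bound $C\,e^{\frac{1}{2}(\coth b')|y|^2}$.
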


\begin{rem} For a different proof of this result  see Theorem 1.4.7 in \cite{T2} where even the equality case has been considered. However, we cannot deduce the equality case from the Beurling's theorem we have proved.
\end{rem}

Here is the plan of the paper. After recalling the preliminaries on the Heisenberg group and collecting all the relevant results needed  in the next section, we will prove  the  theorems in Section 3.  

\section{Preliminaries on the Heisenberg group}

\subsection{Heisenberg group and the Fourier transform}
We develop the required background for the Heisenberg group.  General references for this section are the monographs \cite{T1},\cite{T2} and the book \cite{GF} of Folland.
Let $\mathbb{H}^n:=\mathbb{C}^n\times\mathbb{R}$ be the $(2n+1)$- dimensional Heisenberg group with the group law
	 
	$$(z, t)(w, s):=\big(z+w, t+s+\frac{1}{2}\Im(z.\bar{w})\big),\ \forall (z,t),(w,s)\in \mathbb{H}^n.$$ This is a step two nilpotent Lie group where the Lebesgue measure $dzdt$ on $\mathbb{C}^n\times\mathbb{R}$ serves as the Haar measure. In this article, we identify $ \C^n $ with  $ \R^{2n} $ and write the  elements of $ \He^n $ as $ (x,u,t).$  The group law takes the form
$$ (x,u,t) (y,v,s) = (x+y, u+v, t+s+\frac{1}{2}(u\cdot y-v \cdot x)).$$ The representation theory of $\mathbb{H}^n$ is well-studied in the literature. In order to define Fourier transform, we use the Schr\"odinger representations as described below.  \\
	
	For each non zero real number $ \lambda $ we have an infinite dimensional representation $ \pi_\lambda $ realised on the Hilbert space $ L^2( \R^n).$ These are explicitly given by
	$$ \pi_\lambda(x,u,t) \varphi(\xi) = e^{i\lambda t} e^{i\lambda(x \cdot \xi+ \frac{1}{2}x \cdot u)}\varphi(\xi+u),\,\,\,$$
	where  $ \varphi \in L^2(\R^n).$ These representations are known to be  unitary and irreducible. Moreover, by a theorem of Stone and Von-Neumann, (see e.g., \cite{GF})  upto unitary equivalence these account for all the infinite dimensional irreducible unitary representations of $ \mathbb{H}^n $ which act as $e^{i\lambda t}I$ on the center. Also there is another class of finite dimensional irreducible representations which   do not contribute to the Plancherel measure. Hence   we will not describe them here.\\
	
	The Fourier transform of a function $ f \in L^1(\mathbb{H}^n) $ is the operator valued function  defined on the set of all nonzero reals, $ \R^\ast $ given by
	$$ \widehat{f}(\lambda) = \int_{\mathbb{H}^n} f(x,u,t) \pi_\lambda(x,u,t)  dx du dt .$$  Note that $ \widehat{f}(\lambda) $ is a bounded linear operator on $ L^2(\R^n).$ It is known that when $ f \in L^1 \cap L^2(\mathbb{H}^n) $ its Fourier transform  is actually a Hilbert-Schmidt operator and one has
	\begin{equation}  \int_{\mathbb{H}^n} |f(x,u,t)|^2 dx du dt = (2\pi)^{-n-1}\int_{-\infty}^\infty \|\widehat{f}(\lambda)\|_{HS}^2 |\lambda|^n d\lambda  
	\end{equation}
	where $\|.\|_{HS}$ denotes the Hilbert-Schmidt norm. 
	The above allows us to extend  the Fourier transform as a unitary operator between $ L^2(\mathbb{H}^n) $ and the Hilbert space of Hilbert-Schmidt operator valued functions  on $ \R $ which are square integrable with respect to the Plancherel measure  $ d\mu(\lambda) = (2\pi)^{-n-1} |\lambda|^n d\lambda.$\\
	
	 For suitable functions $f$ on $\He^n$ we also have the following inversion formula: with the notation $ \pi_\lambda(x,u) = \pi_\lambda(x,u,0) $
	\begin{equation} f(x,u,t)=\int_{-\infty}^{\infty}   e^{-i \lambda t} \tr(\pi_{\lambda}(x,u)^*\widehat{f}(\lambda))d\mu(\lambda). \end{equation}\\
	From the  definition $ \pi_\lambda(x,u,t) = e^{i\lambda t} \pi_\lambda(x,u) $ and hence it follows that 
	$$\widehat{f}(\lambda)=\int_{\C^n}f^{\lambda}(x,u)\pi_{\lambda}(x,u) dx du $$ where 
	$f^{\lambda}$ stands for the inverse Fourier transform of $f$ in the central variable:
	$$f^{\lambda}(x,u):=\int_{-\infty}^{\infty}e^{i\lambda t}f(x,u,t)dt.$$
	This suggests that for functions  $g$ on $\C^n$, we consider the following  operator 
		$$ \pi_{\lambda}(g):=\int_{\C^n}g(z)\pi_{\lambda}(x,u) dx du$$ 
		known as the Weyl transform of $ g $. With these notations we note that  $\widehat{f}(\lambda)=\pi_{\lambda}(f^{\lambda}).$   We have the  Plancherel formula for the Weyl transform  (See \cite[2.2.9, page no-49]{T2}):  
		$$ \|g\|_2^2 =  (2\pi)^{-n} |\lambda|^n \|\pi_{\lambda}(g)\|^2_{HS}, $$ 
		for $ g\in L^2(\mathbb{\R}^{2n}) $  and the inversion formula reads as 
		\begin{equation}
			\label{weyl-inv}
			g(x,u) = (2\pi)^{-n}  |\lambda|^{n} \tr (\pi_\lambda(x,u)^\ast \pi_\lambda(g)).
		\end{equation}   
		We will make use of  these properties in the proof  of  Beurling's theorem on $ \He^n.$\\
		
The convolution on the Heisenberg group gives rise to a family of convolutions on $ \C^n$ called twisted convolutions. Recall that
$$ f \ast g(x,u,t) =\int_{\He^n} 	f\big(x-u,y-v,t-s-\frac{1}{2} (u \cdot y- v \cdot x)\big)	g\big(y,v,s\big)  du dv ds.$$
By calculating the inverse Fourier transform of $ f \ast g(x,u,t) $ in the last variable, we get
$$  \big(f \ast g\big)^\lambda(x,u) =  \int_{\R^{2n}} f^\lambda(x-u,y-v) g^\lambda(y,v) e^{i \frac{\lambda}{2}(u\cdot y- v \cdot x)}  dy dv = f^\lambda \ast_\lambda g^\lambda(x,u).$$
The integral on the right is called the $ \lambda$-twisted convolution of $ f^\lambda $ with $ g^\lambda.$ We call the function $f^\lambda(x-u,y-v)  e^{i \frac{\lambda}{2}(u\cdot y- v \cdot x)} $ the $ \lambda$-twisted translate of $ f^\lambda$ by $ (y,v).$ The well known relation $ \widehat{f \ast g}(\lambda) = \widehat{f}(\lambda) \widehat{g}(\lambda) $ leads to the formula $ \pi_\lambda( f^\lambda \ast_\lambda g^\lambda) = \pi_\lambda(f^\lambda) \pi_\lambda(g^\lambda).$

\subsection{Hermite and Laguerre  functions} We are tempted to define   $ \pi_\lambda(z,w) $  for complex values $ (z,w) \in \C^{2n} $  by the action
$$ \pi_\lambda(z,w) \varphi(\xi) =  e^{i\lambda(z \cdot \xi+ \frac{1}{2} z \cdot w)}\varphi(\xi+w).\,\,\,$$
This will make sense only if  $ \varphi(z) $ can be defined for complex $ z $ and the resulting function $e^{i\lambda(z \cdot \xi+ \frac{1}{2} z \cdot w)}\varphi(\xi+w)$ also belongs to $ L^2(\R^n).$ Fortunately, there is a dense subspace of $ L^2(\R^n) $ on which we can define $ \pi_\lambda(z,w) .$  This subspace consists of finite linear combinations of the Hermite functions $  \Phi_\alpha^\lambda(\xi) = |\lambda|^{n/4} \Phi_\alpha(|\lambda|^{1/2} \xi) $ where
$$ \Phi_\alpha(\xi) = c_\alpha \, H_\alpha(\xi) e^{-\frac{1}{2} |\xi|^2} .$$ Here $H_\alpha(\xi) $ are the Hermite polynomials on $ \R^n $ and $ c_\alpha $ are normalising constants. These functions $ \Phi_\alpha^\lambda, \alpha \in \mathbb{N}^n $ which are eigenfunctions of 
$ H(\lambda) = - \Delta+ \lambda^2 |\xi|^2$ with eigenvalues $ (2|\alpha|+n)|\lambda| $, form an orthonormal basis for $ L^2(\R^n).$ As $ H_\alpha $ are polynomials, we can extend $ \Phi_\alpha(\xi) $ to $ \C^n$ by setting $ \Phi_\alpha(z) = c_\alpha H_\alpha(z) e^{-\frac{1}{2}z^2} $ where $ z^2 = \sum_{j=1}^n z_j^2.$ With this we have
$$ \pi_\lambda(z,w) \Phi_\alpha^\lambda(\xi)  =  e^{i\lambda(z \cdot \xi+ \frac{1}{2} z \cdot w)}\Phi_\alpha^\lambda(\xi+w).\,\,\,$$
These functions $ \pi_\lambda(z,w) \Phi_\alpha^\lambda $ turn out to be in $ L^2(\R^n) $ and hence $ \pi_\lambda(z,w) $ are densely defined unbounded operators. The formal adjoint of $ \pi_\lambda(z,w) $ can be calculated and we have
$ \pi_\lambda(z,w)^\ast = \pi_\lambda(-\bar{z}, -\bar{w}).$ Thus for $ (y,v) \in \R^{2n}$ the operators $ \pi_\lambda(i(y,v)) $ are self-adjoint. \\

The $ L^2(\R^n) $ norm of $ \pi_\lambda(z,w) \Phi_\alpha^\lambda $ can be calculated in terms of the special Hermite functions 
$$ \Phi_{\alpha,\beta}^\lambda(x,u) = (2\pi)^{-n/2}  \big(\pi_\lambda(x,u) \Phi_\alpha^\lambda, \Phi_\beta^\lambda \big).$$
The special Hermite functions $\Phi_{\alpha,\beta}^\lambda(x,u) $ are expressible in terms of Laguerre functions. As a result, these functions can also defined for $ (z,w) \in \C^{2n} $ as entire functions. For our purposes, we just recall one important  formula. Let $ L_k^{n-1}(r),\, r > 0 $ stand for Laguerre polynomials of type $ (n-1).$ We let 
$$  \varphi_{k,\lambda}^{n-1}(x,u) = L_k^{n-1}\big(\frac{1}{2}|\lambda| (|x|^2+|u|^2)\big) e^{-\frac{1}{4} |\lambda| (|x|^2+|u|^2)} $$
which has a natural holomorphic extension to $ \C^n \times \C^n.$ We will make use of the formula
$$  (2\pi)^{-n/2}\,\varphi_{k,\lambda}^{n-1}(z,w) = \sum_{|\alpha|=k } \Phi_{\alpha,\alpha}^\lambda(z,w) $$ 
which expresses the Laguerre function $  \varphi_{k,\lambda}^{n-1}(z,w) $ in terms of  special Hermite functions.\\

The special Hermite functions $ \Phi_{\alpha,\beta}^\lambda, \alpha, \beta \in \mathbb{N}^n $ form an orthonormal basis for $ L^2(\R^{2n}).$ The special Hermite expansion of any function $ g \in L^2(\R^{2n}) $ can be put in the compact form
$$ g(x,u) = (2\pi)^{-n} \, |\lambda|^n \, \sum_{k=0}^\infty g \ast_\lambda  \varphi_{k,\lambda}^{n-1}(x,u) .$$
In terms of this expansion, the inversion formula for the Fourier transform on $ \He^n $ can be written in the form
$$ f(x,u,t) = (2\pi)^{-n-1} \int_{-\infty}^\infty   e^{-i \lambda t} \Big( \sum_{k=0}^\infty f^\lambda \ast_\lambda  \varphi_{k,\lambda}^{n-1}(x,u) \Big) |\lambda|^n d\lambda.$$

\subsection{ Gutzmer's formula}  As mentioned in the introduction, we let  $ \sigma(x,u) $ stand for the action of $ U(n) $ identified with $ K = Sp(n,\R) \cap O(2n,\R) $ on $ \R^{2n}.$  The symplectic form $ [(x,u), (y,v)] = (u \cdot y - v \cdot x) $ on $ \R^{2n} $ is invariant under the action of $ K.$  The action of this group  has a natural extension to $ \C^n \times \C^n $ and so it make sense to talk about $ \sigma (z, w) $ for $ z, w \in \C^n.$  In Theorem \ref{B-H} we have imposed the condition
\begin{equation}\label{condi-1-1}
  \int_{U(n)}  \|\pi_\lambda(\sigma(z,w))^\ast \widehat{f}(\lambda)\|_{HS}^2  \, d\sigma  < \infty
  \end{equation}
on  the Fourier transform  $ \hat{f}(\lambda) .$ From this we would like to conclude that $ f^\lambda(x,u) $ can be extended to $ \C^{2n} $ as a holomorphic function. The integral appearing above can be explicitly computed  using Gutzmer's formula for special Hermite expansions proved in \cite{T3}, see Theorem 6.2 (Caution: the integral over $ U(n) $ is missing there!) 

\begin{thm}[Gutzmer]\label{Gutz}
Let $ g \in L^2(\R^{2n}) $ has a holomorphic extension $ G $ to $ \C^{2n}.$ Then  we have
$$ \int_{U(n)}   \int_{\R^{2n}}  |G((x,u)+i \sigma(y,v))|^2 e^{\lambda(u \cdot y- v \cdot x)}  \, dx du d\sigma $$
$$ = c_n  \sum_{k=0}^\infty \frac{k!(n-1)!}{(k+n-1)!} \varphi_{k,\lambda}^{n-1}(2iy,2iv) \|  g \ast_\lambda \varphi_{k,\lambda}^{n-1} \|_2^2.$$
\end{thm}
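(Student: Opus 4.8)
The plan is to reduce the left-hand integral to a Hilbert–Schmidt norm by means of the Weyl inversion formula (\ref{weyl-inv}) and the Plancherel theorem for the Weyl transform, and then to carry out the average over $U(n)$ using the oscillator (metaplectic) intertwining operators. Write $W=\pi_\lambda(g)$ and $b=(y,v)$, and recall that the holomorphic extension $G$ is obtained by inserting complex arguments into the inversion formula, so that $G((x,u)+i\sigma b)=(2\pi)^{-n}|\lambda|^n\,\tr\big(\pi_\lambda((x,u)+i\sigma b)^\ast W\big)$.

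First I would split the complexified representation. Using the Weyl commutation relation together with the self-adjointness of $\pi_\lambda(i\sigma b)$, one factors $\pi_\lambda((x,u)+i\sigma b)$ into $\pi_\lambda(x,u)$ times $\pi_\lambda(i\sigma b)$ up to a real scalar $e^{\pm\frac12\lambda[(x,u),\sigma b]}$, where $[\cdot,\cdot]$ is the symplectic form. Substituting this into the trace and using cyclicity identifies $G((x,u)+i\sigma b)$, up to that scalar, with the inverse Weyl transform of the operator $W\pi_\lambda(i\sigma b)$ evaluated at $(x,u)$. The exponential weight in the statement is precisely the one that cancels the modulus squared of this scalar, so that after cancellation the inner integrand becomes $|h_\sigma(x,u)|^2$, where $h_\sigma$ is the function whose Weyl transform is $W\pi_\lambda(i\sigma b)$.

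Next I would integrate in $(x,u)$ over $\R^{2n}$ and apply the Plancherel theorem for the Weyl transform, which turns $\int_{\R^{2n}}|h_\sigma|^2\,dx\,du$ into a constant multiple of $\|W\pi_\lambda(i\sigma b)\|_{HS}^2=\tr\big(W^\ast W\,\pi_\lambda(i\sigma b)^2\big)$; here I use that $\pi_\lambda(i\sigma b)$ is self-adjoint and that $\pi_\lambda(i\sigma b)^2=\pi_\lambda(2i\sigma b)$ by the (trivial) cocycle. To perform the remaining average over $U(n)$ I would invoke the intertwining relation $\pi_\lambda(\sigma\zeta)=\mu_\lambda(\sigma)\pi_\lambda(\zeta)\mu_\lambda(\sigma)^\ast$, valid for complex $\zeta$ by analytic continuation, where $\mu_\lambda$ is the oscillator representation (the cocycle ambiguity cancels in the conjugation). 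Since $2i\sigma b=\sigma(2ib)$, this moves the group element onto $\mu_\lambda$. Because $\mu_\lambda$ decomposes $L^2(\R^n)$ into the mutually inequivalent irreducible $U(n)$-modules $\mathcal H_k=\mathrm{span}\{\Phi_\alpha^\lambda:|\alpha|=k\}$, Schur's lemma shows that $\int_{U(n)}\mu_\lambda(\sigma)\pi_\lambda(2ib)\mu_\lambda(\sigma)^\ast\,d\sigma$ equals $\sum_k\frac{1}{\dim\mathcal H_k}\tr\big(\pi_\lambda(2ib)P_k\big)\,P_k$, with $P_k$ the projection onto $\mathcal H_k$ and $\dim\mathcal H_k=\binom{k+n-1}{n-1}$, which produces the coefficient $\frac{k!(n-1)!}{(k+n-1)!}$.

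Finally I would evaluate the two remaining traces. Using $\Phi_{\alpha,\alpha}^\lambda(z,w)=(2\pi)^{-n/2}(\pi_\lambda(z,w)\Phi_\alpha^\lambda,\Phi_\alpha^\lambda)$ at $(z,w)=(2iy,2iv)$ and summing over $|\alpha|=k$, the identity $(2\pi)^{-n/2}\varphi_{k,\lambda}^{n-1}(z,w)=\sum_{|\alpha|=k}\Phi_{\alpha,\alpha}^\lambda(z,w)$ gives $\tr(\pi_\lambda(2ib)P_k)=\varphi_{k,\lambda}^{n-1}(2iy,2iv)$; and since $\pi_\lambda(\varphi_{k,\lambda}^{n-1})$ is a constant multiple of $P_k$, the factor $\tr(W^\ast W P_k)=\|WP_k\|_{HS}^2$ is a constant multiple of $\|\pi_\lambda(g\ast_\lambda\varphi_{k,\lambda}^{n-1})\|_{HS}^2$, hence of $\|g\ast_\lambda\varphi_{k,\lambda}^{n-1}\|_2^2$ by Plancherel once more. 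Collecting constants into $c_n$ yields the asserted formula. The main obstacle is rigor rather than structure: the operators $\pi_\lambda(z,w)$ are unbounded, so the interchange of the $U(n)$-average, the $\R^{2n}$-integral and the sum over $k$, together with the trace-class manipulations, must be justified. The cleanest route is to prove the identity first for $g$ in the dense span of finitely many special Hermite functions $\Phi_{\alpha,\beta}^\lambda$, where $G$ is entire, $W$ has finite rank and every step is elementary, and then to pass to general $g\in L^2(\R^{2n})$ by monotone convergence, the positivity of all terms on the right making the limit transparent.
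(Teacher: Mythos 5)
You should know at the outset that this paper contains no proof of Theorem \ref{Gutz}: the result is imported from \cite{T3} (Theorem 6.2 there), with only the caution that the $U(n)$-integral is missing in that reference. So there is no in-paper argument to compare yours against; what you have written is in effect a reconstruction of the standard proof, and its skeleton is correct. Weyl inversion plus the Plancherel theorem for the Weyl transform do convert the inner $\R^{2n}$-integral into $\|W\pi_\lambda(i\sigma b)\|_{HS}^2$; the $U(n)$-average is correctly computed through the metaplectic intertwining and Schur's lemma (the spaces $\mathcal{H}_k$ are indeed irreducible and mutually inequivalent, and $\dim \mathcal{H}_k = \binom{k+n-1}{n-1}$ produces the factor $\frac{k!(n-1)!}{(k+n-1)!}$); and the two trace evaluations $\tr(\pi_\lambda(2ib)P_k)=\varphi_{k,\lambda}^{n-1}(2iy,2iv)$ and $\tr(W^\ast WP_k)= c\,\|g\ast_\lambda\varphi_{k,\lambda}^{n-1}\|_2^2$ are right. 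Your use of cyclicity to land on $W\pi_\lambda(i\sigma b)$ rather than $\pi_\lambda(i\sigma b)W$ is not cosmetic: the first gives $\|WP_k\|_{HS}^2$, hence the right twisted convolution $g\ast_\lambda\varphi_{k,\lambda}^{n-1}$ appearing in the statement, while the other order would produce $\|P_kW\|_{HS}^2$, i.e.\ $\varphi_{k,\lambda}^{n-1}\ast_\lambda g$, which is a different quantity.

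There are, however, two genuine gaps. First, the cancellation you assert is not with the weight as literally written. The scalar created by splitting $\pi_\lambda((x,u)+i\sigma(y,v))$ is $e^{\pm\frac{1}{2}\lambda[(x,u),\sigma(y,v)]}$, so the weight that cancels its squared modulus is $e^{\mp\lambda[(x,u),\sigma(y,v)]}$, carrying the \emph{rotated} vector $\sigma(y,v)$, whereas the theorem's weight $e^{\lambda(u\cdot y - v\cdot x)}$ carries the unrotated $(y,v)$. What your argument actually proves is the identity with left-hand side $\int_{U(n)}\int_{\R^{2n}}|G((x,u)+i\sigma(y,v))|^2 e^{\lambda[(x,u),\sigma(y,v)]}\,dx\,du\,d\sigma$, equivalently (after the substitution $(x,u)\mapsto\sigma(x,u)$) the version in which $\sigma$ rotates the entire complex argument and the weight stays unrotated. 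This discrepancy is really a slip in the paper's own formulation (it recurs inside the proof of Proposition \ref{conseq-1}), but your write-up should state the corrected left-hand side explicitly instead of claiming a cancellation that fails for the statement as printed.

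Second, the limiting step is not a monotone convergence argument. If $g_N$ denotes the truncation of $g$ onto the special Hermite functions with $k\le N$, then indeed the right-hand side for $g_N$ increases to the right-hand side for $g$; but on the left you would need $G_N\to G$ pointwise at complex points, and nothing gives you that: $L^2$ convergence on $\R^{2n}$ carries no information about the values of the holomorphic extensions off $\R^{2n}$ (locally uniform convergence of the special Hermite expansion in tubes is essentially part of what the theorem asserts, so invoking it is circular). This passage is where the real analytic content of Gutzmer-type formulas lies. The workable route is the one used in \cite{T3}: whenever the inner integral is finite, the function $\eta\mapsto G(\eta+i\sigma b)\,e^{\frac{\lambda}{2}[\eta,\sigma b]}$ lies in $L^2(\R^{2n})$, so one expands it by Parseval in the special Hermite basis and lets the $U(n)$-average kill the cross terms via Schur orthogonality of matrix coefficients; identifying the resulting coefficients with those of $g$ requires a contour shift that is justified by the Gaussian decay of $\Phi_{\alpha,\beta}^\lambda$ in tubes. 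Your dense-class reduction is fine as far as it goes, but as stated it does not reach general $g\in L^2(\R^{2n})$.
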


Using the above result, we can calculate the integral (\ref{condi-1-1}).

\begin{prop}\label{conseq-1} For any $ f \in L^2(\He^n) $ we have the identity
$$  \int_{U(n)}  \|\pi_\lambda(\sigma(z,w)) \hat{f}(\lambda)\|_{HS}^2  \, d\sigma  $$
$$=  e^{-\lambda(u\cdot y-x \cdot v)} \sum_{k=0}^\infty \frac{k!(n-1)!}{(k+n-1)!} \varphi_{k,\lambda}^{n-1}(2iy,2iv) \| f^\lambda \ast_\lambda \varphi_{k,\lambda}^{n-1}\|_2^2$$
with the understanding that either the left hand side or the right hand side is finite.
\end{prop}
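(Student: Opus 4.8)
The plan is to strip the $(x,u)$-dependence off as an explicit scalar, reduce the Hilbert--Schmidt norm to a weighted $L^2$-integral of the holomorphic extension of $f^\lambda$, and then quote Theorem \ref{Gutz} essentially verbatim. The first step uses the adjoint relation $\pi_\lambda(z,w)^\ast=\pi_\lambda(-\bar z,-\bar w)$ together with the complexified Weyl commutation relation recalled in Subsection 2.2. Writing $z=x+iy,\ w=u+iv$, one factors
$$\pi_\lambda(\sigma(z,w))=e^{-\frac12\lambda(u\cdot y-v\cdot x)}\,\pi_\lambda(i\sigma(y,v))\,\pi_\lambda(\sigma(x,u)),$$
where the scalar is produced by the $K$-invariance of the symplectic form $[(x,u),(y,v)]=u\cdot y-v\cdot x$, so that the exponent computed from $\sigma(x,u),\sigma(y,v)$ agrees with the one computed from $(x,u),(y,v)$. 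Since $\pi_\lambda(\sigma(x,u))$ is unitary while $\pi_\lambda(i\sigma(y,v))$ is self-adjoint, forming $\pi_\lambda(\sigma(z,w))\pi_\lambda(\sigma(z,w))^\ast$ lets the unitary factor cancel inside the trace, and I would obtain
$$\|\pi_\lambda(\sigma(z,w))^\ast\widehat f(\lambda)\|_{HS}^2=e^{-\lambda(u\cdot y-v\cdot x)}\,\|\pi_\lambda(i\sigma(y,v))\widehat f(\lambda)\|_{HS}^2 .$$
All of the $(x,u)$-dependence is now isolated in the prefactor $e^{-\lambda(u\cdot y-v\cdot x)}$, which is exactly the exponential in the Proposition.

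Next I would turn the surviving norm into an integral. Writing $\widehat f(\lambda)=\pi_\lambda(f^\lambda)$ and using $\pi_\lambda(g\ast_\lambda h)=\pi_\lambda(g)\pi_\lambda(h)$ with $g$ a point mass at the imaginary point $i\sigma(y,v)$, the operator $\pi_\lambda(i\sigma(y,v))\widehat f(\lambda)$ is the Weyl transform of the $\lambda$-twisted translate of $f^\lambda$ by $i\sigma(y,v)$, the translation being carried out on the holomorphic extension of $f^\lambda$. The Plancherel theorem for the Weyl transform then gives
$$\|\pi_\lambda(i\sigma(y,v))\widehat f(\lambda)\|_{HS}^2=(2\pi)^n|\lambda|^{-n}\int_{\R^{2n}}\bigl|f^\lambda\bigl((p,q)-i\sigma(y,v)\bigr)\bigr|^2\,e^{-\lambda[\sigma(y,v),(p,q)]}\,dp\,dq,$$
with $f^\lambda$ understood as its holomorphic extension and the weight arising from the modulus of the (now purely real-exponential) twisting phase.

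Then I would integrate in $\sigma$ over $U(n)$ and recognize the double integral as the left-hand side of Gutzmer's formula for $g=f^\lambda$. Using the $U(n)$-invariance of Lebesgue measure and of the symplectic form, and the presence of $-I\in U(n)$ together with the evenness of $\varphi_{k,\lambda}^{n-1}(2iy,2iv)$ to align the sign of the shift and of the weight, the averaged integral becomes $\int_{U(n)}\int_{\R^{2n}}|f^\lambda((p,q)+i\sigma(y,v))|^2 e^{\lambda(q\cdot y-v\cdot p)}\,dp\,dq\,d\sigma$, precisely the left side of Theorem \ref{Gutz}. Applying that theorem produces the Laguerre sum, and the constants from the two Plancherel-type identities combine into the stated constant, yielding the Proposition. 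Because every integrand is nonnegative, Tonelli allows the $\sigma$-integral and the $k$-sum to be exchanged without restriction, which is exactly what produces the clause ``either the left-hand side or the right-hand side is finite''.

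The genuine difficulty I expect is operator-theoretic rather than computational: one must justify that $\pi_\lambda(i\sigma(y,v))\widehat f(\lambda)$ is actually Hilbert--Schmidt and that its inverse Weyl transform really is the claimed (generally unbounded) twisted translate of the holomorphic extension of $f^\lambda$, so that the Weyl--Plancherel identity may be invoked. This is where the standing hypotheses of Theorem \ref{B-H}, namely $\mathcal R(\widehat f(\lambda))\subset\mathcal D(\pi_\lambda(z,w))$ and the trace-class assumption, together with the finiteness in (\ref{condi-1-1}) and the holomorphic extendability of $f^\lambda$, all enter. A secondary but real point is matching the precise form of the weight to the statement of Theorem \ref{Gutz}, where the convention for the $U(n)$-average flagged just after that theorem must be respected.
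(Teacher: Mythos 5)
Your opening step is fine, and in fact slicker than the paper's: factoring $\pi_\lambda(\sigma(z,w))^\ast = e^{c}\,\pi_\lambda(\sigma(x,u))^\ast\,\pi_\lambda(i\sigma(y,v))$ with a real scalar $c$ coming from the $K$-invariant symplectic form, and using invariance of the Hilbert--Schmidt norm under left multiplication by a unitary, does isolate the prefactor $e^{-\lambda(u\cdot y-v\cdot x)}$ (the paper produces this factor instead by a change of variables inside the Weyl--Plancherel integral). The genuine gap is in your second and third steps. Both the interpretation of $\pi_\lambda(i\sigma(y,v))\widehat f(\lambda)$ as the Weyl transform of a twisted translate of $f^\lambda$ at an \emph{imaginary} point, and the application of Theorem \ref{Gutz} with $g=f^\lambda$, require $f^\lambda$ to possess a holomorphic extension to $\C^{2n}$; that is a hypothesis of Gutzmer's formula, not a free fact. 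The Proposition, however, is asserted for \emph{every} $f\in L^2(\He^n)$, and in the paper's architecture the holomorphic extendability of $f^\lambda$ is deduced only \emph{afterwards} (subsection 3.2), from the finiteness of the right-hand side of this very Proposition combined with Perron's asymptotics and the Poisson-kernel factorization $f^\lambda = g_\lambda\ast_\lambda P^\lambda_\rho$. Treating extendability as a ``standing hypothesis,'' as you do in your closing paragraph, therefore makes the argument circular, and without it your reduction to Gutzmer cannot even be formulated.

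The paper's proof is engineered precisely to sidestep this. One decomposes $\widehat f(\lambda)=\sum_k \widehat f(\lambda)P_k(\lambda)$ and observes that $\widehat f(\lambda)P_k(\lambda)=(2\pi)^{-n}|\lambda|^n\,\pi_\lambda(g_k)$ with $g_k=f^\lambda\ast_\lambda\varphi_{k,\lambda}^{n-1}$. For \emph{any} $f^\lambda\in L^2(\R^{2n})$ each $g_k$ extends to an entire function on $\C^{2n}$ (it is a twisted convolution with the entire function $\varphi_{k,\lambda}^{n-1}$), so the complexified relation $\pi_\lambda(z,w)^\ast\pi_\lambda(g_k)=\pi_\lambda(\tau^\lambda_{(z,w)}g_k)$ and Gutzmer's formula apply legitimately to each $g_k$; the orthogonality
$$\varphi_{k,\lambda}^{n-1}\ast_\lambda\varphi_{j,\lambda}^{n-1}=\delta_{jk}(2\pi)^n|\lambda|^{-n}\varphi_{k,\lambda}^{n-1}$$
then collapses the Gutzmer sum for $g_k$ to the single term $\frac{k!(n-1)!}{(k+n-1)!}\varphi_{k,\lambda}^{n-1}(2iy,2iv)\,\|f^\lambda\ast_\lambda\varphi_{k,\lambda}^{n-1}\|_2^2$, and summing over $k$ (all terms nonnegative, so Tonelli applies, as you correctly anticipated) yields the identity as an equality in $[0,\infty]$, which is exactly where the ``either side finite'' clause comes from. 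If you wanted to keep your global approach you would have to first prove the identity on a dense class of $f$ with entire $f^\lambda$ and then pass to the limit, which is harder than the per-$k$ argument; the spectral decomposition by the projections $P_k(\lambda)$ is the missing idea.
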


\begin{proof}In order to calculate (\ref{condi-1-1}) let us write it as the sum
$$   \sum_{k=0}^\infty  \int_{U(n)} \|\pi_\lambda(\sigma(z,w))^\ast \widehat{f}(\lambda)P_k(\lambda)\|_{HS}^2    \, d\sigma.$$
As $ \widehat{f}(\lambda) = \pi_\lambda(f^\lambda) $ and $ \pi_\lambda(\varphi_{k,\lambda}^{n-1}) = (2\pi)^n |\lambda|^{-n} P_k(\lambda) $  (see \cite{T3}, Proposition 2.3.3) we have 
$$ \widehat{f}(\lambda)P_k(\lambda) = (2\pi)^{-n} |\lambda|^n  \pi_\lambda(f^\lambda \ast_\lambda \varphi_{k,\lambda}^{n-1}).$$
We note that the function $ g(x,u) = f^\lambda \ast_\lambda \varphi_{k,\lambda}^{n-1}(x,u) $ extends to $ \C^{2n} $ as a homomorphic function and hence the relation
$$ \pi_\lambda(x,u)^\ast \pi_\lambda(g) = \pi_\lambda(\tau_{(x,u)}^\lambda g), \,\,\, \tau_{(x,u)}^\lambda g(x^\prime,u^\prime) = g(x^\prime-x,u^\prime-u)e^{-i\frac{\lambda}{2}(u\cdot x^\prime- u^\prime\cdot x)} $$
continues to hold even when $ (x,u) $ is replaced by $ (z,w).$ Therefore, by the Plancherel theorem for the Weyl transform, we have
$$ (2\pi)^{-n} |\lambda|^{n} \int_{\R^{2n}} |\tau_{(z,w)}^\lambda g(x^\prime,u^\prime)|^2 \, dx^\prime du^\prime =  \|\pi_\lambda((z,w))^\ast \widehat{f}(\lambda)P_k(\lambda)\|_{HS}^2.     $$ 
Recalling the definition of $ \tau_{(z,w)}^\lambda g$ and making some change of variables, we see that
$$   \|\pi_\lambda((z,w))^\ast \widehat{f}(\lambda)P_k(\lambda)\|_{HS}^2 $$  
$$ = (2\pi)^{-n} |\lambda|^{n}   e^{-\lambda(u \cdot y- v \cdot x)} \int_{\R^{2n}} |g(x^\prime-iy,u^\prime-iv)|^2 e^{- \lambda(u^\prime\cdot y -x^\prime \cdot v)}   dx^\prime du^\prime. $$ 
As the Lebesgue measure on $ \R^{2n} $ as well the symplectic form $ [(x,u),(y,v)] = (u\cdot y-v \cdot x) $ is   invariant under the action of $ U(n) $ the  integral in the above equation becomes
$$  \int_{\R^{2n}} |g\big((x^\prime,u^\prime)+i\sigma(-y,-v)\big)|^2 e^{- \lambda(u^\prime\cdot y -x^\prime \cdot v)}   dx^\prime du^\prime. $$ 
We can now appeal to Theorem \ref{Gutz} and use the relation 
$$ \varphi_{k,\lambda}^{n-1} \ast_\lambda \varphi_{j,\lambda}^{n-1} =  \delta_{jk} (2\pi)^n |\lambda|^{-n} \varphi_{k,\lambda}^{n-1} $$  to arrive at the identity
$$   \|\pi_\lambda((z,w))^\ast \widehat{f}(\lambda)P_k(\lambda)\|_{HS}^2  $$
$$= e^{-\lambda(u\cdot y-x \cdot v)}  \frac{k!(n-1)!}{(k+n-1)!} \varphi_{k,\lambda}^{n-1}(2iy,2iv) \| f^\lambda \ast_\lambda \varphi_{k,\lambda}^{n-1}\|_2^2.$$  
This completes the proof of the proposition.
\end{proof}

\subsection{ The sublaplacian and the associated heat kernel}  We let $ \mathfrak{h}_n $ stand for the Heisenberg Lie algebra consisting of left invariant vector fields on $ \mathbb{H}^n .$  A  basis for $ \mathfrak{h}_n $ is provided by the $ 2n+1 $ vector fields
	 $$ X_j = \frac{\partial}{\partial{x_j}}+\frac{1}{2} u_j \frac{\partial}{\partial t}, \,\,U_j = \frac{\partial}{\partial{u_j}}-\frac{1}{2} x_j \frac{\partial}{\partial t}, \,\, j = 1,2,..., n $$
	 and $ T = \frac{\partial}{\partial t}.$  These correspond to certain one parameter subgroups of $ \mathbb{H}^n.$ The sublaplacian on $\He^n$ is defined by $\mathcal{L}:=-\sum_{j=1}^{\infty}(X_j^2+U_j^2) $ which is given explicitly by
	 $$\mathcal{L}=-\Delta_{\R^{2n}}-\frac{1}{4}|(x,u)|^2\frac{\partial^2}{\partial t^2}+N\frac{\partial}{\partial t}$$ where  $\Delta_{\R^{2n}}$ stands for the Laplacian on $\R^{2n}$ and $N$ is the rotation operator defined by 
	 $$N=\sum_{j=1}^{n}\left(x_j\frac{\partial}{\partial u_j}-u_j\frac{\partial}{\partial x_j}\right).$$ This is a sub-elliptic operator and homogeneous of degree $2$ with respect to the non-isotropic dilation given by $\delta_r(x,u,t)=(rx,ru,r^2t).$\\
	 
	 Along with the sublaplacian, we also  consider the special Hermite operator $L_{\lambda}$ defined by the relation $(\mathcal{L}f)^{\lambda}(x,u)=L_{\lambda}f^{\lambda}(x,u).$ It follows that $ L_\lambda $  is explicitly given by 
	 $$L_\lambda = -\Delta_{\R^{2n}}+\frac{1}{4} \lambda^2 |(x,u)|^2+ i \lambda N.$$ It turns out that  $f^{\lambda}\ast_{\lambda}\varphi_{k,\lambda}(x,u)$ are eigenfunctions of $L_{\lambda}$ with eigenvalues $(2k+n)|\lambda|$ and hence  we have the following expansion 
	 $$L_{\lambda}f^{\lambda}(x,u)=(2\pi)^{-n} |\lambda|^n \sum_{k=0}^\infty ((2k+n)|\lambda|)\,   f^{\lambda}\ast_{\lambda}\varphi_{k,\lambda}^{n-1}(x,u)$$ 
	 leading to the following spectral decomposition of $\mathcal{L}$:
	 \begin{equation}
	 	\label{lspec}
	 	\mathcal{L}f(x,u,t)= (2\pi)^{-n-1}  \int_{-\infty}^\infty  e^{-i\lambda t} \Big( \sum_{k=0}^\infty ((2k+n)|\lambda|)\,   f^{\lambda}\ast_{\lambda}\varphi_{k,\lambda}^{n-1}(x,u)\Big) |\lambda|^n d\lambda. 
	 \end{equation}	
	 The heat kernel $ p_a(x,u,t), a >0$ associated to the sublaplacian is defined by
	 $$ p_a(x,u,t) = (2\pi)^{-n-1}  \int_{-\infty}^\infty  e^{-i\lambda t} \Big( \sum_{k=0}^\infty e^{-a (2k+n)|\lambda|} \varphi_{k,\lambda}^{n-1}(x,u)\Big) |\lambda|^n d\lambda.$$
	 Though the heat kernel $ p_a $ is not known explicitly, $ p_a^\lambda(x,u) $ can be calculated. In fact,
 $$ p_a^\lambda(x,u) = (2\pi)^{-n}  \sum_{k=0}^\infty e^{-a (2k+n)|\lambda|} \varphi_{k,\lambda}^{n-1}(x,u)$$
 and the above series can be summed using a generating function identity for Laguerre functions to get
 \begin{equation}  p_a^\lambda(x,u) = (4\pi)^{-n}  \Big( \frac{\lambda}{\sinh a\lambda}\Big)^n  e^{-\frac{1}{4}\lambda (\coth a\lambda)(|x|^2+|u|^2)}.
 \end{equation}
  In terms of this, the semigroup generated by $ L_\lambda $ is given by $ e^{-aL_\lambda} g(x,u) = g \ast_\lambda p_a^\lambda(x,u).$\\
 
 From the explicit formula for $ p_a^\lambda $ it is easy to see that $ g \ast_\lambda p_a^\lambda(x,u) $ has a holomorphic extension to $ \C^{2n}.$ The problem of characterising the image of $ L^2(\R^{2n}) $ under the transform $ T_a^\lambda g(z,w) = g \ast_\lambda p_a^\lambda(z,w) $ has been studied in Kr\"otz et al \cite{KTX} where the authors have proved the following result.

\begin{thm}[Kr\"otz et al]\label{twist-berg}An entire function $ G(z,w) $ on $ \C^{2n} $  is square integrable  with respect to the weight function $ e^{\lambda( u\cdot y-v \cdot x)} p_{2a}^\lambda(2y,2v) $
if and only if $ G(x,u) = g \ast_\lambda p_a^\lambda(x,u) $ for some $ g \in L^2(\R^{2n}).$ Moreover, for any $ g \in L^2(\R^{2n}) $ we have
$$ \int_{\C^{2n}} | g \ast_\lambda p_a^\lambda(z,w)|^2 e^{\lambda( u\cdot y-v \cdot x)} p_{2a}^\lambda(2y,2v) dz dw = c_\lambda \int_{\R^{2n}} |g(x,u)|^2 dx du.$$
\end{thm}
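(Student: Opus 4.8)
The plan is to reduce the whole statement to Gutzmer's formula (Theorem \ref{Gutz}) applied to the holomorphic extension of $G$, together with one explicit Laguerre--Gaussian integral. \emph{First, I compute the special Hermite coefficients of the transform.} For $ g \in L^2(\R^{2n}) $ the function $ G(x,u) = g \ast_\lambda p_a^\lambda(x,u) $ lies in $ L^2(\R^{2n}) $ and, as noted just before the theorem, extends holomorphically to $ \C^{2n} $. Using the Laguerre expansion $ p_a^\lambda = (2\pi)^{-n}\sum_{j} e^{-a(2j+n)|\lambda|}\varphi_{j,\lambda}^{n-1} $, the orthogonality relation $ \varphi_{j,\lambda}^{n-1}\ast_\lambda \varphi_{k,\lambda}^{n-1} = \delta_{jk}(2\pi)^n|\lambda|^{-n}\varphi_{k,\lambda}^{n-1} $, and associativity of twisted convolution, I obtain
$$ G \ast_\lambda \varphi_{k,\lambda}^{n-1} = g \ast_\lambda \big(p_a^\lambda \ast_\lambda \varphi_{k,\lambda}^{n-1}\big) = e^{-a(2k+n)|\lambda|}\,|\lambda|^{-n}\, g \ast_\lambda \varphi_{k,\lambda}^{n-1}, $$
so that $ \|G\ast_\lambda\varphi_{k,\lambda}^{n-1}\|_2^2 = e^{-2a(2k+n)|\lambda|}|\lambda|^{-2n}\|g\ast_\lambda\varphi_{k,\lambda}^{n-1}\|_2^2 $. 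This records how $ e^{-aL_\lambda} $ damps the $ k $-th spectral block.

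\emph{Next I insert the weight via Gutzmer.} Applying Theorem \ref{Gutz} to $ G $ and integrating both sides against the radial weight $ p_{2a}^\lambda(2y,2v) $ in $ (y,v) $, the Bergman integral on the left, namely $ \int_{\R^{2n}}\big(\int_{\R^{2n}}|G((x,u)+i(y,v))|^2 e^{\lambda(u\cdot y - v\cdot x)}\,dx\,du\big)p_{2a}^\lambda(2y,2v)\,dy\,dv $, is unchanged if inside the inner integral I replace $ (y,v) $ by $ \sigma(y,v) $ and average over $ \sigma\in U(n) $: both $ dy\,dv $ and the radial weight $ p_{2a}^\lambda(2y,2v) $ are $ U(n) $-invariant, and so is the symplectic form $ u\cdot y-v\cdot x $. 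Thus the left-hand side equals $ \int_{\R^{2n}} $ of the $ \sigma $-averaged inner integral, which by Gutzmer becomes $ c_n\sum_k \frac{k!(n-1)!}{(k+n-1)!}\|G\ast_\lambda\varphi_{k,\lambda}^{n-1}\|_2^2 \int_{\R^{2n}}\varphi_{k,\lambda}^{n-1}(2iy,2iv)\,p_{2a}^\lambda(2y,2v)\,dy\,dv $. Since $ \varphi_{k,\lambda}^{n-1}(2iy,2iv) = L_k^{n-1}(-2|\lambda|(|y|^2+|v|^2))e^{|\lambda|(|y|^2+|v|^2)} \geq 0 $, every term is nonnegative and Tonelli licenses the interchange of sum and integral.

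\emph{The crux is the reproducing integral}
$$ I_k := \int_{\R^{2n}}\varphi_{k,\lambda}^{n-1}(2iy,2iv)\,p_{2a}^\lambda(2y,2v)\,dy\,dv. $$
Using $ (2iy)^2=-4|y|^2 $ and $ p_{2a}^\lambda(2y,2v) = (4\pi)^{-n}(\lambda/\sinh 2a\lambda)^n e^{-\lambda(\coth 2a\lambda)(|y|^2+|v|^2)} $, this is a radial Gaussian integral of a Laguerre function, computable by the standard Laguerre integral/generating-function formula; it evaluates to a constant multiple of $ \frac{(k+n-1)!}{k!(n-1)!}\,e^{2a(2k+n)|\lambda|}|\lambda|^{2n} $. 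Multiplying by the two $ k $-dependent factors from the previous steps, the binomial weight $ \frac{k!(n-1)!}{(k+n-1)!} $ and the factor $ e^{-2a(2k+n)|\lambda|}|\lambda|^{-2n} $ cancel exactly, collapsing the sum to a $ k $-independent constant times $ \sum_k\|g\ast_\lambda\varphi_{k,\lambda}^{n-1}\|_2^2 $, which by Plancherel for the special Hermite expansion is a constant multiple of $ \|g\|_2^2 $. This is the asserted isometry, with the constant $ c_\lambda $, and in particular shows $ T_a^\lambda $ maps $ L^2(\R^{2n}) $ into the twisted Bergman space.

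\emph{Finally I characterise the image.} The isometry makes the image a closed subspace, so it suffices to prove surjectivity. Given an entire $ G $ of finite Bergman norm, its restriction to $ \R^{2n} $ lies in $ L^2(\R^{2n}) $, and Gutzmer identifies the Bergman norm with $ c_n\sum_k\frac{k!(n-1)!}{(k+n-1)!}I_k\,\|G\ast_\lambda\varphi_{k,\lambda}^{n-1}\|_2^2<\infty $. Inverting the relation from the first step by declaring $ g\ast_\lambda\varphi_{k,\lambda}^{n-1} := |\lambda|^n e^{a(2k+n)|\lambda|}\,G\ast_\lambda\varphi_{k,\lambda}^{n-1} $ then yields, via the same cancellation, an $ L^2 $ function with $ g\ast_\lambda p_a^\lambda = G $. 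I expect the main obstacle to be the exact evaluation of $ I_k $ and the verification that the three $ k $-dependent factors cancel, along with the bookkeeping of the constants $ c_n,c_\lambda $; a secondary technical point is confirming in the surjectivity step that the reconstructed $ g $ genuinely lies in $ L^2 $ and reproduces $ G $, which again rests on the same reproducing identity.
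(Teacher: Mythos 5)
First, a point of context: the paper does not prove Theorem \ref{twist-berg} at all; it quotes it from Kr\"otz--Thangavelu--Xu \cite{KTX}. The closest the paper comes to a proof is subsection 2.5, where exactly your mechanism (Gutzmer's formula plus the Laguerre integral $\frac{k!(n-1)!}{(k+n-1)!}\int p_t^\lambda\,\varphi_{k,\lambda}^{n-1}(2iy,2iv)\,dy\,dv = c_n e^{t(2k+n)|\lambda|}$, quoted there from Lemma 6.3 of \cite{T3}) is used to manufacture twisted Bergman spaces from general radial weights. Your isometry computation is sound and is precisely that machinery specialised to $w_\lambda(y,v)=p_{2a}^\lambda(2y,2v)$, $q_\lambda=p_a^\lambda$, $C_\lambda(k)=e^{2a(2k+n)|\lambda|}$: the semigroup damping $e^{-a(2k+n)|\lambda|}$, the binomial factor from Gutzmer, and the factor $\frac{(k+n-1)!}{k!(n-1)!}e^{2a(2k+n)|\lambda|}$ from $I_k$ cancel, leaving $c_\lambda\sum_k\|g\ast_\lambda\varphi_{k,\lambda}^{n-1}\|_2^2=c_\lambda'\|g\|_2^2$. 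One bookkeeping caution: in the $U(n)$-averaging step the symplectic exponent must rotate together with the imaginary part, i.e.\ the inner integral you produce is $\int |G((x,u)+i\sigma(y,v))|^2 e^{\lambda[(x,u),\sigma(y,v)]}\,dx\,du$ with $[(x,u),(y,v)]=u\cdot y-v\cdot x$; this is the form in which Gutzmer's formula actually holds (the exponent in the statement of Theorem \ref{Gutz} is imprecise on this point, as the author's own ``Caution'' about \cite{T3} hints), so your reduction is correct but deserves an explicit line.

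The genuine gap is in your final paragraph, the ``only if'' (surjectivity) direction. You assert that an entire $G$ with finite twisted Bergman norm restricts to an element of $L^2(\R^{2n})$, and everything afterwards hinges on this, because Theorem \ref{Gutz} can only be applied to a function already known to lie in $L^2(\R^{2n})$ with entire extension. That assertion is not obvious, and it is in fact the hard half of the theorem. The obstruction is the indefinite factor $e^{\lambda(u\cdot y - v\cdot x)}$ in the weight: along suitable directions of the tube it decays like $e^{-|\lambda|\,|(y,v)|\,|(x,u)|}$, so finiteness of the Bergman integral does not control the real slice by any local sub-mean-value argument --- such arguments only give $|G(x,u)|^2 \leq C_r\, e^{r|\lambda|\,|(x,u)|}$ times the local Bergman mass, which cannot be summed to an $L^2$ bound. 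In \cite{KTX} the converse is established by a different route (the complexified special Hermite functions $\Phi_{\alpha,\beta}^\lambda(z,w)$ form a complete orthogonal system in the twisted Bergman space, and one reads off the preimage from the expansion coefficients), not by restricting to $\R^{2n}$ and invoking Gutzmer. So either supply a proof that finite Bergman norm forces $G|_{\R^{2n}}\in L^2(\R^{2n})$, or replace the last step by a completeness argument of that kind; as written, the converse direction is unproven, while your isometry half stands.
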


Thus we are led to consider the twisted Bergman space $ \widetilde{\mathcal{B}}_\lambda(\C^{2n}, w_\lambda) $ consisting of all entire functions $ G(z,w) $ on $ \C^{2n} $  which are  square integrable  with respect to the weight function $ p_{2a}^\lambda(2y,2v)e^{\lambda( u\cdot y-v \cdot x)}.$ In the above setting   the map $ T_a^\lambda $ is an isometry up to a constant multiple from $ L^2(\R^{2n}) $ onto $ \widetilde{\mathcal{B}}_\lambda(\C^{2n}, w_\lambda) .$\\

We now consider the spaces $ \widetilde{\mathcal{B}}_\lambda(\C^{2n}, w_\lambda) $ associated to the weight function $ w_\lambda(y,v).$ From the form of the weight, it is easy to see that these spaces are invariant under twisted translations. These are reproducing kernel Hilbert spaces and hence possess a reproducing kernel which we denote by $ K((z,w), (z^\prime,w^\prime)).$ From general theory it follows that every $ G \in  \widetilde{\mathcal{B}}_\lambda(\C^{2n}, w_\lambda) $ satisfies the point-wise estimate $ |G(z,w)|^2 \leq C K((z,w),(z,w)).$ In the case of the space associated to the transform $ T_a^\lambda $ the reproducing kernel is given by
$$  e^{-i \frac{\lambda}{2}( w \cdot z^\prime- z \cdot w^\prime)}p_a^\lambda \ast_\lambda p_a^\lambda(z-\bar{z^\prime}, w-\bar{w^\prime}) = e^{-i \frac{\lambda}{2}( w \cdot z^\prime- z \cdot w^\prime)} p_{2a}^\lambda(z-\bar{z^\prime}, w-\bar{w^\prime}).$$ Thus for any $ g \in L^2(\R^{2n}) $ we get the estimate
$$ | g \ast_\lambda p_a^\lambda(z,w)|^2 \leq C \, p_{2a}^\lambda(2iy,2iv) .$$
If we assume that $ \widetilde{\mathcal{B}}_\lambda(\C^{2n}, w_\lambda) $ occurs as the image of a transform $ T_{q_\lambda} g = g \ast_\lambda q_\lambda ,$ then we can calculate the reproducing kernel in terms of $ q_\lambda$ and we can prove the estimate
$$ | g \ast_\lambda q_\lambda(z,w)|^2 \leq C \,  q_\lambda \ast_\lambda q_\lambda(2iy,2iv) e^{\lambda(u\cdot y-v \cdot x)}.$$ This is precisely what motivated us to formulate Theorem \ref{B-H-1}.\\

\subsection{Constructing examples of twisted Bergman spaces}  
In the introduction we have mentioned how one can construct a weighed Bergman space on $ \C^n $ which is invariant under the action of $ \R^n$ starting from a weight function $ w .$ In this subsection, we describe such a procedure to construct twisted Bergman spaces on $ \C^{2n}.$  For that purpose, as in the Euclidean case, we make use of the Gutzmer's formula described in the previous subsection.\\

We start with a positive weight function $ w_\lambda(y,v) $ on $ \R^{2n} $ which we assume to be radial. Consider the sequence $ C_\lambda(k) $ defined by
$$ C_\lambda(k)  =  \frac{k!(n-1)!}{(k+n-1)!} \, \int_{\R^{2n}}  w_\lambda (y,v)  \varphi_{k,\lambda}^{n-1}(2iy,2iv) dy dv .$$
Under the assumption that $ C_\lambda(k) $ has enough decay, we can define  the radial function
$$ q_\lambda(x,u) = \sum_{k=0}^\infty C_\lambda(k)^{-1/2} \, \varphi_{k,\lambda}^{n-1}(x,u) .$$ 
Recall that these functions $  \varphi_{k,\lambda}^{n-1}(x,u) $ extend to $ \C^{2n} $ as entire functions and satisfy the estimates
$$ | \varphi_{k,\lambda}^{n-1}(z,w)|^2 \leq C_t \, e^{2t(2k+n)|\lambda|} \, p_{2t}^\lambda(2iy,2iv) $$ for any $ t >0,$ (see \cite{KTX}.) If we know that $ C_\lambda(k)^{-1}  \leq c_\lambda\, e^{-t (2k+n)^s |\lambda|^s} $ for some $ t > 0,  s >1,$  then the function $ q_\lambda$  defines a holomorphic function on the whole of $ \C^{2n}.$  It then follows that  for any $ f \in L^2(\R^{2n}) $ the function $ f \ast_\lambda q_\lambda $ has a holomorphic extension to $ \C^n \times \C^n $  and by Gutzmer's formula we have
$$ \int_{\C^{2n}} |f \ast_\lambda q_\lambda(z,w)|^2 w_\lambda(y,v) \, e^{\lambda (u \cdot y- v \cdot x)} \, dz dw $$
$$  = c_{n,\lambda} \,  \sum_{k=0}^\infty  \| f \ast_\lambda  \varphi_{k,\lambda}^{n-1} \|_2^2 \, \, C_\lambda(k)^{-1}\,  \,  \frac{k!(n-1)!}{(k+n-1)!} \, \int_{\R^{2n}} w_\lambda(y,v)   \varphi_{k,\lambda}^{n-1}(2iy,2iv)  dy dv.$$
By our choice of $ C_\lambda(k) $ the right hand side of the above reduces to  $ \| f\|_2^2 $ proving that the image of $ L^2(\R^{2n}) $ under the transform $ T_{q_\lambda}: f \rightarrow f \ast_\lambda q_\lambda(z,w) $ is a twisted Bergman space with weight function $ w_\lambda(y,v).$ The reproducing kernel for this 
space is given by 
$$ K_\lambda((z,w), (z^\prime,w^\prime)) =  e^{-i \frac{\lambda}{2}( w \cdot z^\prime- z \cdot w^\prime)} q_\lambda \ast_\lambda q_\lambda(z-\bar{z^\prime}, w- \bar{w^\prime}) .$$

When we take the  weight function $ w_\lambda(y,v) = p_{2t}^\lambda(2y,2v)$ it is known that $ C_\lambda(k) = e^{2t(2k+n)|\lambda|} $ and hence $ q_\lambda(x,u) = p_t^\lambda(x,u) $  which is coming from the Segal-Bargmann transform $ T_t f = f \ast p_t $ on $ \He^n.$ By taking superpositions of $ p_t^\lambda$ we can construct other examples of $ w_\lambda $ and the kernels $ q_\lambda.$  For example, for any $ 1 < s < \infty,$ let us define

$$ w_\lambda(y,v) = \int_0^\infty t^n e^{-\frac{1}{s} t^s} p_t^\lambda(2y,2v) dt . $$
We recall the following result proved in \cite{T3}, (see Lemma 6.3):
$$ \frac{k! (n-1)!}{(k+n-1)!} \int_{\R^{2n}} p_t^\lambda(iy,iv) \varphi_{k,\lambda}^{n-1}(iy,iv) dy dv = c_n \, e^{t(2k+n)|\lambda|}.$$
Integrating $ w_\lambda $ against $  \varphi_{k,\lambda}^{n-1}(2iy,2iv) $ and making use of the above formula  we see that
$$ \frac{k! (n-1)!}{(k+n-1)!} \, \int_{\R^{2n}}  w_\lambda (y,v)   \varphi_{k,\lambda}^{n-1}(2iy,2iv) dy dv = c_n \, \int_0^\infty t^n e^{-\frac{1}{s} t^s + (2k+n)|\lambda| t} dt\,\, .$$
Let $  A = (2k+n)|\lambda|$ and define $ s^\prime $ as the index conjugate to $ s.$ By  writing $ (\frac{1}{s}t^s-At) = \big(\frac{1}{s} t^s + \frac{1}{s^\prime} A^{s^\prime}- At \big) -\frac{1}{s^\prime} A^{s^\prime}$ and making the change of variables $ t \rightarrow A^{s^\prime /s} t$  the above integral becomes
$$ A^{(n+1) s^\prime /s} e^{\frac{1}{s^\prime}A^{s^\prime}} \int_0^\infty  t^n e^{-A^{s^\prime} ( \frac{1}{s} t^s+ \frac{1}{s^\prime} -t)}  dt. $$
As the function $ \psi(t) = (\frac{1}{s} t^s+ \frac{1}{s^\prime} -t) $ decreases from $ \frac{1}{s^\prime} $ to $ 0 $ on the interval $ [0,1] $ we can choose $ 0 < t_0 < 1 $ such that $ \psi(t_0) = \frac{1}{2s^\prime} $ and get the lower bound
$$ \int_0^\infty  t^n e^{-A^{s^\prime} ( \frac{1}{s} t^s+ \frac{1}{s^\prime} -t)}  dt \geq  e^{- \frac{1}{2s^\prime}A^{s^\prime}} \, \int_{t_0}^1 t^n dt = c \,e^{- \frac{1}{2s^\prime}A^{s^\prime}}. $$
This gives us the following  lower bound for the sequence $ C_\lambda(k): $ 
$$ C_\lambda(k)  \geq c_\lambda   \, ((2k+n)|\lambda|)^{(n+1)(s^\prime-s)} e^{ \frac{1}{2s^\prime} ((2k+n)|\lambda|)^{s^\prime}} .$$
Thus we see that $ C_\lambda(k)^{-1/2} $ has the required decay so that the function $ q_\lambda $ gives rise to a twisted Bergman space.\\

\section{Beurling's theorem and its consequences}  

\subsection{Hedenmalm's proof} For the convenience of the reader we recall Hedenmalms's proof from his paper \cite{HH}. 
The idea is very simple: under the assumption that $ f $ and $ \widehat{f} $ satisfy (\ref{B-C}), the function
$$ F(\zeta) = \int_{\R^n}\int_{\R^n} \bar{f}(y) \hat{f}(\xi) e^{i\zeta(y, \xi)} dy d\xi $$ defines a holomorphic function on the strip
$ S =  \{ \zeta: |\Im \zeta | < 1\} $ and extends continuously up to its closure. We also note that
 $$ F(\zeta) = (2\pi)^{n/2} \int_{\R^n}  \bar{f}(y) \, f(\zeta y) \,dy.$$When $ \zeta = r $ is real, owing to the inversion formula for the Fourier transform, we have
$$ F(r) = (2\pi)^{n/2} \int_{\R^n}  \bar{f}(y) \, f(ry) \,dy = r^{-n} \overline{F(r^{-1})}.$$
 By defining $ G(\zeta) = (1+\zeta^2)^{n/2} F(\zeta) $ we see that  functional relation $ F(r) = r^{-n}\overline{F(r^{-1})}$ translates into $ G(r) = \overline{G(r^{-1})}.$  The function $ G $ is holomorphic in a neighbourhood of  $ |\zeta| \leq 1$  save for $i $ and $ -i.$ By defining $ G_1(\bar{\zeta}) =  \overline{G(\frac{1}{\zeta})} $ we get another holomorphic function on $ |\zeta| > 1.$    These two  domains have an overlap along the real line where they coincide. Thus we see  that $ G $  extends holomorphically to the whole of $ \C $ save for $i $ and $ -i.$ However, these are removable singularities as $ F $ remains bounded in neighbourhoods of these points. Therefore, $ G $ is entire and bounded and hence reduces to a constant. But as $ F $ vanishes at infinity we conclude that $ F(\zeta) = 0 $ for all $ \zeta.$  As $ F(1) = \|f\|_2^2 = 0 ,$ this proves the theorem.\\
 
If we already know that $ f $ has a holomorphic extension to $ \C^n $ then the Buerling's condition can be replaced by the modified one (\ref{mod-b}). The above proof remains valid without any problem.

\subsection{Beurling's theorem on $ \He^n$}  We are now ready for proving Theorem \ref{B-H}. As in the Euclidean case we start with defining the function
$$  F_\lambda(\zeta) =   \int_{\R^{2n}}  \overline{f^\lambda(y,v)} f^\lambda(\zeta y, \zeta v)    dy dv  $$ for complex values of $ \zeta.$  For this to make sense, we have to first check if $ f^\lambda(x,u) $ can be holomorphically extended to some domain in $ \C^{2n} $ and the integral converges for all $ \zeta $ in some other domain in $ \C.$  Our hypothesis (\ref{condi-one}) when combined with Proposition \ref{conseq-1} leads to
\begin{equation}\label{conseq-2} \sum_{k=0}^\infty \frac{k!(n-1)!}{(k+n-1)!} \, \varphi_{k,\lambda}^{n-1}(2iy,2iv) \,  \| f^\lambda \ast_\lambda \varphi_{k,\lambda}^{n-1}\|_2^2 < \infty .
\end{equation}
We recall the following asymptotic behaviour of the Laguerre polynomials: Perron's formula for Laguerre polynomials
in the complex domain (see Theorem 8.22.3 in Szego \cite{GS}) states
\begin{equation}\label{perron}
L_k^\alpha(s) =\frac{1}{2} \pi^{-1/2} e^{s/2} (-s)^{-\alpha/2-1/4} k^{\alpha/2-1/4} e^{2 \sqrt{-sk}} \big( 1+O(k^{-1/2})\big)
\end{equation}
valid for $s $ in the complex plane cut along the positive real axis. From this we get
$$  C_1(\rho) (2k+n)^{\gamma_1} e^{c_1 \rho \sqrt{2k+n}} \leq L_k^{n-1}(-2\rho^2) e^{\rho^2} \leq C_2(\rho) (2k+n)^{\gamma_2} e^{c_2 \rho \sqrt{2k+n}} $$ as $ k $ tends to infinity.  In view of this, the finiteness of (\ref{conseq-2}) for all $ (y,v) $ implies that  for every $ \rho >0,$
$$ \| f^\lambda \ast_\lambda \varphi_{k,\lambda}^{n-1} \|_2 \leq C e^{-\rho \sqrt{(2k+n)|\lambda|}} .$$
This allows us to write  $ f^\lambda = g_\lambda \ast_\lambda P_\rho^\lambda $ where $ g_\lambda \in L^2(\R^{2n})$ and 
$$ P_\rho^\lambda(x,u) = (2\pi)^{-n}  \sum_{k=0}^\infty e^{-\rho\, \sqrt{(2k+n)|\lambda|}} \, \varphi_{k,\lambda}^{n-1}(x,u) $$
is the Poisson kernel associated to the operator $ L_\lambda.$ Observe that $ P_\rho^\lambda $ extends holomorphically to a tube domain $ \Omega_{\gamma \rho} = \{ (z,w): |(y,v)| < \gamma \rho \} $ for some $ \gamma >0.$ This is a 
consequence of the estimate
$$ |\varphi_{k,\lambda}^{n-1}(z,w) |^2 \leq C \frac{(k+n-1)!}{k! (n-1)!} e^{\lambda (u \cdot y - v \cdot x)} \varphi_{k,\lambda}^{n-1}(2iy,2iv) $$
(see Proposition 3.1 in \cite{T3}). Therefore, from  the factorisation $ f^\lambda = g_\lambda \ast_\lambda P_\rho^\lambda $ we infer that $ f^\lambda $ extends holomorphically to a tube domain $ \Omega_{\gamma \rho}.$ As this is true for any $ \rho$ we conclude that $ f^\lambda $ has a holomorphic extension to the whole of $ \C^{2n}.$\\

Thus the function $ \zeta \rightarrow f^\lambda(\zeta y, \zeta v) $ is holomorphic, but still the integral defining $ F_\lambda(\zeta) $ need not converge. From the inversion formula for the Weyl transform, we have
$$ f^\lambda(y,v) = (2\pi)^{-n} |\lambda|^n  \tr \big( \pi_\lambda(y,v)^\ast \widehat{f}(\lambda)\big),$$
from which we obtain, with $ \zeta = r+is$, the following expression:
$$f^\lambda(\zeta y, \zeta v) = (2\pi)^{-n} |\lambda|^n  \tr \big( \pi_\lambda(r(y,v))^\ast  \pi_\lambda(is(y,v)) \widehat{f}(\lambda)\big).$$
As $\pi_\lambda(r(y,v))^\ast $ are unitary operators, the above gives the estimate 
$$   |  f^\lambda(\zeta y, \zeta v) | =(2\pi)^{-n} |\lambda|^n  | \tr( \pi_\lambda( r(y,v))^\ast \, \pi_\lambda(is (y,v))\hat{f}(\lambda))|  \leq c_\lambda \|  \pi_\lambda(is (y,v)) \, \hat{f}(\lambda) \|_1. $$ 
By considering the radial majorant of the above, for $ |s| \leq 1$, we get the estimate
$$ |f^\lambda(\zeta y, \zeta v)| \leq c_\lambda \,  \sup_{|(y^\prime,v^\prime)| \leq |(y,v)| } \| \pi_\lambda((i(y^\prime,v^\prime))  \widehat{f}(\lambda) \|_1. $$
Therefore, in view of the hypothesis (\ref{mod-b-h}), it follows that $ F_\lambda(\zeta) $ defines a holomorphic function on the strip $ S =  \{ \zeta: |\Im \zeta | < 1\} $ which is continuous up to the boundary. We also note that for $ r > 0 $  the relation $ F_\lambda(r) = r^{-2n} \overline{F_\lambda(r^{-1})}.$ From this point we just need to repeat Hedenmalm's proof to conclude that $ f^\lambda = 0.$ As this is true true for all  $ \lambda \in \R^\ast,$  Theorem \ref{B-H} is proved. It is also clear that the hypothesis (\ref{mod-b-h}) can be replaced by (\ref{mod-b-h-1}).

\subsection{Hardy and Cowling-Price theorems}  We can now easily deduce Hardy and Cowling-Price theorems on the Heisenberg group from Theorem \ref{B-H}. In doing so, we will make use of the following theorem of  Douglas \cite{RD}. (Here we have stated only  part of the theorem which is relevant for us).

\begin{thm}[Douglas] Let $ A $ and $ B $ be two bounded linear operators on a Hilbert space $ \mathcal{H}.$ Then $ A A^\ast  \leq c^2 B B^\ast $ if and only if there is a bounded linear operator $ C $ such that $ A = BC.$
\end{thm}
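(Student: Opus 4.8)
The statement is the classical Douglas factorization lemma (the range-inclusion/majorization criterion), and I would prove the two implications separately.

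The easy direction is ($\Leftarrow$): if $A = BC$ for a bounded operator $C$, then $A A^\ast = B C C^\ast B^\ast$. Since $C C^\ast \leq \|C\|^2 I$ as operators, conjugating this inequality by $B$ preserves it, giving $A A^\ast = B (C C^\ast) B^\ast \leq \|C\|^2 \, B B^\ast$. Thus the majorization holds with $c = \|C\|$, and this direction needs nothing beyond the definition of the operator ordering.

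The substantive direction is ($\Rightarrow$). The plan is to construct the adjoint $D := C^\ast$ first on the range of $B^\ast$ and then dualize. Starting from $A A^\ast \leq c^2 B B^\ast$ and pairing against an arbitrary $y \in \mathcal{H}$ gives the single pointwise estimate
$$ \|A^\ast y\|^2 = \langle A A^\ast y, y \rangle \leq c^2 \langle B B^\ast y, y \rangle = c^2 \|B^\ast y\|^2, $$
which does all the work. It shows first that the assignment $B^\ast y \mapsto A^\ast y$ is \emph{well defined} on $\mathcal{R}(B^\ast)$: if $B^\ast y = B^\ast y'$ then $\|A^\ast(y - y')\| \leq c \|B^\ast(y - y')\| = 0$, so $A^\ast y = A^\ast y'$. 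Second, the same inequality shows the map is bounded by $c$ on $\mathcal{R}(B^\ast)$, hence extends by continuity to the closure $\overline{\mathcal{R}(B^\ast)}$; declaring it to be zero on the orthogonal complement $\mathcal{R}(B^\ast)^\perp = \ker B$ then produces a bounded operator $D$ on all of $\mathcal{H}$ with $\|D\| \leq c$ and $D B^\ast = A^\ast$. Taking adjoints yields $B D^\ast = A$, so $C := D^\ast$ is the required bounded operator with $A = BC$ and $\|C\| = \|D\| \leq c$.

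The only point demanding care—and what I would flag as the heart of the argument—is the well-definedness and uniform boundedness of the map on $\mathcal{R}(B^\ast)$, both of which reduce to the kernel inclusion $\ker B^\ast \subseteq \ker A^\ast$ packaged in the estimate above. Once that is in hand, the continuous extension to $\overline{\mathcal{R}(B^\ast)}$, the trivial extension across the orthogonal complement, and the passage to adjoints are all routine, so no genuine obstacle remains.
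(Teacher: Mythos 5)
Your proof is correct, and it is essentially the classical argument of Douglas himself. Note that the paper does not prove this statement at all: it is quoted as a known result with a citation to Douglas's 1966 paper, so there is no internal proof to compare against. Your write-up supplies exactly the standard factorization argument that the citation points to: the easy direction via $CC^\ast \leq \|C\|^2 I$ conjugated by $B$, and the substantive direction via the pointwise estimate $\|A^\ast y\| \leq c\,\|B^\ast y\|$, which gives well-definedness and boundedness of $B^\ast y \mapsto A^\ast y$ on $\mathcal{R}(B^\ast)$, continuous extension to $\overline{\mathcal{R}(B^\ast)}$, the zero extension on $\mathcal{R}(B^\ast)^\perp = \ker B$, and passage to adjoints to get $A = BC$ with $\|C\| \leq c$. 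All steps are sound; in particular you correctly identified the kernel inclusion $\ker B^\ast \subseteq \ker A^\ast$ as the crux, and the orthogonal decomposition guarantees $\|D\| \leq c$ since $D$ vanishes on the complementary summand.
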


In view of this result, the second assumption $ \widehat{f}(\lambda)^\ast \widehat{f}(\lambda)  \leq C e^{-2b H(\lambda)} $ of Hardy's theorem gives us $ \widehat{f}(\lambda) = B_\lambda e^{-bH(\lambda)} $ where $ B_\lambda $ are bounded.  As $ a < b $ we can choose $ b^\prime $ such that $ a < b^\prime < b$ and write $ \widehat{f}(\lambda) = B_\lambda^\prime e^{-b^\prime H(\lambda)} $ where now $ B_\lambda^\prime $ is Hilbert-Schmidt. This allows us to write $ f^\lambda = g_\lambda \ast_\lambda p_{b^\prime}^\lambda $ with $ g_\lambda \in L^2(\R^{2n}).$ Thus, $ f^\lambda $ belongs to the twisted Bergman space with weight function $ p_{2b^\prime}^\lambda(2y,2v).$  This leads us to the estimate
$$ |f^\lambda(z,w)|^2 \leq c_\lambda e^{\lambda(u \cdot y-v \cdot x)} p_{2b^\prime}^\lambda(2iy,2iv).$$ Thus we can take $ F_\lambda(y,v)^2 = p_{2b^\prime}^\lambda(2iy,2iv) $  and see if this verifies the condition (\ref{mod-b-h-1}).  The assumption on $ f $ namely $ |f(x,u,t)| \leq C p_a(x,u,t) $ gives the estimate
$$ |f^\lambda(y,v)| \leq C \int_{-\infty}^\infty p_a(y,v,t) dt \leq C_a  e^{-\frac{1}{4a} (|y|^2+|v|^2)} $$
which is not good enough for verifying (\ref{mod-b-h-1}) for all values of $ \lambda$ but only for small enough values. However, it serves the purpose when combined with an analyticity argument.\\

The following estimate for the heat kernel is known (see e.g Proposition 2.8.2 in \cite{T2}):
$$ |p_a(y,v,t)| \leq c_n  a^{-n-1} e^{-\frac{d}{a}(|(y,v)|^2+ |t|)} $$
for some $ d>0.$ As a consequence, the estimate $ |f(y,v,t)| \leq  C_a e^{-\frac{d}{a}(|(y,v)|^2+ |t|)} $ allows us to extend $ \lambda \rightarrow f^\lambda $ as an $ L^2(\R^{2n}) $ valued holomorphic function on the strip $ |\Im(\lambda)| < d/a.$ Thus, we only need to show that $ f^\lambda = 0 $ for all $ 0 < \lambda < \delta $ for some $ \delta >0.$ Consider the integral
$$ \int_{\R^{2n}} |f^\lambda(y,v)| F_\lambda(y,v) dy dv \leq C_a \int_{\R^{2n}} e^{-\frac{1}{4a} (|y|^2+|v|^2)}   \big(p_{2b^\prime}^\lambda(2iy,2iv) \big)^{1/2}   dy dv.$$
Recalling the expression for $ p_{2b^\prime}^\lambda(2iy,2iv) $ we are led to check the finiteness of the integral
$$ \int_{\R^{2n}} e^{-\frac{1}{4a} (|y|^2+|v|^2)}  e^{\frac{1}{2} \lambda (\coth 2b^\prime \lambda) (|y|^2+|v|^2)}  dy dv.$$
As $  \lambda (\coth 2b^\prime \lambda) $ tends to $ (2 b^\prime)^{-1} < (2 a)^{-1 } $ as $ \lambda $ goes to $ 0$ we can choose $ \delta > 0 $ such that $  \lambda (\coth 2b^\prime \lambda) < (2b)^{-1} $ for $ 0 < \lambda < \delta.$ For such $ \delta $ the above integral is finite.
Therefore we can appeal to Beurling's theorem to complete the proof.\\

In the case of Cowling-Price theorem we have  $  f^\lambda(y,v) = g_\lambda \ast_\lambda p_b^\lambda(y,v) $ and hence  we can take $ F_\lambda(y,v)^2 = p_{2b}^\lambda(2iy,2iv) .$  As before,  the hypothesis $ f = h\, p_a $ allows us to extend $ f^\lambda $  holomorphically to $ |\Im(\lambda)| < d/a.$  Moreover,
$$ |f^\lambda(y,v)| \leq \int_{-\infty}^\infty |h(y,v,t)| p_a(y,v,t) dt  \leq \|h(y,v,\cdot)\|_2  \Big( \int_{-\infty}^\infty (p_a^\lambda(y,v))^2 d\lambda \Big)^{1/2}.$$
From the  explicit expression for $ p_a^\lambda(y,v) ,$ using the fact that $ t \cosh t \geq \sinh t $ for $ t \geq 0,$ we get
$$ \int_{-\infty}^\infty (p_a^\lambda(y,v))^2 d\lambda \leq C  e^{-\frac{1}{2a} (|y|^2+|v|^2)}\,  \int_{-\infty}^\infty \Big( \frac{\lambda}{\sinh a\lambda} \Big)^{2n} d\lambda .$$
Thus we are led to consider the finiteness of the integral
$$ \int_{\R^{2n}} \|h(y,v,\cdot)\|_2 \, e^{-\frac{1}{4a} (|y|^2+|v|^2)}  e^{\frac{1}{2} \lambda (\coth 2b \lambda) (|y|^2+|v|^2)}  dy dv.$$
As before, the above integral is finite for small enough $ \lambda $ which completes the proof of Theorem \ref{C-P}.

\subsection{Beurling's theorem for the Hermite expansions.} We begin with some preparations. Recall that the Hermite projections $ P_kf $ are given by
$$ P_kf(x) = \sum_{|\alpha|=k}  (f, \Phi_\alpha) \, \Phi_\alpha(x) = \int_{\R^n}  f(y) \Phi_k(x,y) dy.$$
It is clear that the kernel $ \Phi_k(x,y) = \sum_{|\alpha|=k}   \Phi_\alpha(x) \, \Phi_\alpha(y) $ extends to $ \C^n \times \C^n $ holomorphically and  $ P_kf(x)$  to $ \C^n.$  Since $ P_k^2 = P_k $ we have
$$  |P_kf(z)| = | \int_{\R^n} P_kf(x^\prime) \Phi_k(x^\prime, z) dx^\prime| \leq ||P_kf||_2  \| \Phi_k(\cdot, z)\|_2.$$ In order to estimate $\| \Phi_k(\cdot, z)\|_2$ we observe that
$$ \| \Phi_k(\cdot, z)\|_2^2 = \sum_{|\alpha|=k} \Phi_\alpha(z) \Phi_\alpha(\bar{z})  =\Phi_k(z,\bar{z}).$$
We now recall the following explicit formula for $ \Phi_k(z,w) $ proved in \cite{T4}, Lemma 2.3.
$$ \Phi_k(z,w) = \pi^{-n/2} \sum_{j=0}^k (-1)^j L_j^{n/2-1}(\frac{1}{2}(z+w)^2)L_{k-j}^{n/2-1}(\frac{1}{2}(z-w)^2) e^{-\frac{1}{2}(z^2+w^2)}$$
so that
$ \Phi_k(z,\bar{z}) = \pi^{-n/2} \sum_{j=0}^k (-1)^j L_j^{n/2-1}(2|x|^2) e^{-|x|^2} L_{k-j}^{n/2-1}(-2|y|^2) e^{|y|^2}.$ We now make use of the estimate (see 1.1.39 in \cite{T0})
$$ |L_j^{n/2-1}(2|x|^2)| e^{-|x|^2} \leq L_j^{n/2-1}(0) =  \frac{\Gamma(j+n/2)}{\Gamma(j+1)\Gamma(n/2)} $$ 
and the formula (see 1.1.51 in \cite{T0})
$$  \sum_{j=0}^k   \frac{\Gamma(j+n/2)}{\Gamma(j+1)\Gamma(n/2)} L_{k-j}^{n/2-1}(|x|^2) e^{-\frac{1}{2}|x|^2} = L_k^{n-1}(|x|^2) e^{-\frac{1}{2}|x|^2} $$ 
in estimating $ \Phi_k(z,\bar{z}).$ We have thus proved
$ \Phi_k(z,\bar{z}) \leq \pi^{-n/2} \varphi_k^{n-1}(2iy) $ and hence
\begin{equation}\label{esti-proj}  |P_kf(z)| \leq ||P_kf||_2  \| \Phi_k(\cdot, z)\|_2 \leq \pi^{-n/2}    ||P_kf||_2 \, \sqrt{\varphi_k^{n-1}(2iy)}.
\end{equation}
With these preparations, we can now launch a proof of Theorem \ref{beur-her}.\\

As usual, we define the function $ F $ on the strip $ S $ by
$$ F(\zeta) = \int_{\R^n}  f(\zeta y)  \overline{f(y)}dy =  \int_{\R^n}  \big( \sum_{k=0}^\infty P_kf(\zeta y) \big)\,  \overline{f(y)}  \, dy .$$
In view of the estimate (\ref{esti-proj}) and the fact that $ \varphi_k^{n-1}(2iy) $ is an increasing radial function we see that under the given assumption on $ f $ 
$$ |F(\zeta)|  \leq C \sum_{k=0}^\infty \int_{\R^n}  |f(y)| \,\|P_kf\|_2\, \sqrt{\varphi_k^{n-1}(2iy)} \, dy <\infty.$$
Thus $ F(\zeta) $ is holomorphic on the strip $ S $ and the rest of the proof goes as before.\\

In order to deduce Hardy's theorem from Theorem \ref{beur-her} we need to check if Beurling's condition is verified. Given $ a \tanh b>1, $ we can choose $ 0 < b^\prime < b $ such that $ a \tanh b^\prime >1 $ still holds. By applying Cauchy-Schwarz and using 
the assumption $\| P_kf\|_2 \leq  C \, e^{-\frac{1}{2}b (2k+n)} $ we get
$$ \sum_{k=0}^\infty  \|P_kf\|_2 \, \big(\varphi_k^{n-1}(2iy)\big)^{1/2}  \leq C_b \, \Big( \sum_{k=0}^\infty  e^{-b^\prime (2k+n)} \varphi_k^{n-1}(2iy) \Big)^{1/2}.$$
Since $ \varphi_k^{n-1}(2iy) = \varphi_{k,1}^{n-1}(2iy,0) ,$ the above sum is nothing but $ p_{b^\prime}^1(2iy,0) $ which is known explicitly. Thus we see that
$$ \sum_{k=0}^\infty  \|P_kf\|_2 \, \big(\varphi_k^{n-1}(2iy)\big)^{1/2} \leq C_b^\prime  \, e^{\frac{1}{2} (\coth b^\prime)|y|^2} .$$
Consequently, as $ |f(y)| \leq c\, e^{-\frac{1}{2} a|y|^2} $ and $ a \tanh b^\prime >1, $  we obtain
$$ \sum_{k=0}^\infty \int_{\R^n}  |f(y)| \,\|P_kf\|_2\, \sqrt{\varphi_k^{n-1}(2iy)} \, dy \leq C\, \int_{\R^n} e^{-\frac{1}{2} (a- \coth  b^\prime) |y|^2 } \, dy  <\infty.$$
Thus Beurling's condition is verified and Hardy's theorem is proved.

\section*{Acknowledgments}   This work  is supported by  J. C. Bose Fellowship from the Department of Science and Technology, Government of India.


\begin{thebibliography}{99}

\bibitem{BDJ} A. Bonami, B. Demange and P. Jaming, {Hermite functions and uncertainty principles for the Fourier and windowed Fourier transforms}, \emph{Rev. Mat. Iberoamericana}, \textbf{19} (2003), no. 1, 23-55. 


\bibitem{RD} R. Douglas, {On majorization, factorisation and range inclusion of operators on Hilbert spaces}, \emph{Proc. Amer. Math. Soc.}, \textbf{17} (1966), 413-415.

\bibitem{HH} H. Hedenmalm, {Heisenberg's uncertainty principle in the sense of Beurling}, \emph{ J. Anal. Math.}, \textbf{118} (2012), no. 2, 691–702. 

\bibitem{LH} L. H\"ormander, {A uniqueness theorem of Beurling for Fourier transform pairs}, \emph{ Ark. Mat.}, \textbf{29} (1991), no. 2, 237–240.

\bibitem{GF} G. Folland, \emph{Harmonic Analysis on phase space}, Ann. Math. Stud. \textbf{122}, Princeton Univ. Press, Princeton, NJ, 1989.

\bibitem{KTX}  B. Kr\"otz, S. Thangavelu and Y. Xu, {The heat kernel transform for the Heisenberg group}, \emph{ J. Funct. Anal.}, \textbf{225} (2005), no. 2, 301–336

\bibitem{PT} S. Parui and S. Thangavelu, {Variations on a theorem of Cowling and Price with applications to nilpotent Lie groups}, \emph{ J. Aust. Math. Soc.}, \textbf{82} (2007), 11-27.

\bibitem{GS} G. Szego, \emph{Orthogonal polynomials}, Colloq. Publ. \textbf{23}, Amer. Math. Soc., Providence, RI (1967)

\bibitem{T0} S. Thangavelu, \emph{Lectures on Hermite and Laguerre expansions}, Math. Notes. \textbf{42}, Princeton University Press, Princeton, N.J., 1993.

\bibitem{T1}   S. Thangavelu, \emph{Harmonic analysis on the Heisenberg group},  Progress in Mathematics, \textbf{159}, Birkhäuser Boston, Inc., Boston, MA, 1998.


 
\bibitem{T2}  S. Thangavelu, \emph{An introduction to the uncertainty principle. Hardy's theorem on Lie groups. With a foreword by
	Gerald B. Folland}, Progress in Mathematics \textbf{217}, Birkh\"auser, Boston, MA, 2004 

\bibitem{T3} S. Thangavelu, {Gutzmer's formula and Poisson integrals on the Heisenberg group}, 
\emph{Pacific J. Math.} \textbf{231} (2007), no. 1, 217--237.


\bibitem{T4} S. Thangavelu,  {An analogue of Gutzmer's formula  for Hermite expansions}, \emph{Stud. Math.}\emph{}, \textbf{185} (2008), 279-290. 



\bibitem{T5} S. Thangavelu, {On the unreasonable effectiveness of Gutzmer's formula, Harmonic analysis and partial differential equations}, \emph{Contemp. Math.}, \textbf{505} (2010), 199-217, Amer. Math. Soc., Providence, RI.







\end{thebibliography}
\end{document}